\RequirePackage[l2tabu, orthodox]{nag} %before \documentclass
\documentclass[a4paper,reqno,11pt]{article} %default font size: 10pt %equation numbers on the right
\usepackage[stretch=10]{microtype} %micro layout engine
\usepackage[german,french,UKenglish]{babel}
\usepackage[T1]{fontenc} %needed for French
\usepackage[utf8]{inputenc} %before csquotes
\usepackage{csquotes} %quotations in different languages
\usepackage{letltxmacro} %to fix the parskip before proofs
\usepackage{amsmath,amsfonts,amssymb,amsthm,mathrsfs}
\usepackage[margin=1.3in]{geometry}
\usepackage{setspace}
\usepackage{enumitem}
\usepackage{calc}
\usepackage{makebox}
\usepackage{xcolor}
\usepackage{xspace}
\usepackage{xifthen}
\usepackage{titlesec} %for section title spacing - breaks nameref in hyperref
\usepackage{tikz}
\usepackage{tikz-cd} %commutative diagrams in TikZ
\usetikzlibrary{calc}
\usetikzlibrary{babel}
\usetikzlibrary{matrix,arrows}
\usetikzlibrary{decorations.markings}
\usepackage[lf]{Baskervaldx} % lining figures
\usepackage[bigdelims,vvarbb]{newtxmath} % math italic letters from Nimbus Roman
\usepackage[cal=boondoxo]{mathalfa} % mathcal from STIX, unslanted a bit

\usepackage{verbatim} %includes multiline comment
\usepackage{aliascnt} %for creating different hyperref references for different theoremstyles
\usepackage[unicode,colorlinks=true,urlcolor=blue!70!black,citecolor=blue!60!black,linkcolor=blue!60!black,pdfauthor={Remy van Dobben de Bruyn},pdftitle={Divisors in projective bundles over the projective line whose complement is affine space}]{hyperref} %always add last

\titlespacing{\section}{0pt}{12pt plus 6pt minus 4pt}{0pt plus 4pt minus 2pt}
\titlespacing{\subsection}{0pt}{12pt plus 6pt minus 4pt}{0pt plus 4pt minus 2pt}
\titlespacing{\subsubsection}{0pt}{0pt plus 4pt minus 2pt}{0pt plus 3pt minus 2pt}

\titleformat{\section}[block]{\Large\bfseries\scshape\filcenter}{\thesection.}{1ex}{}
\titleformat{\subsection}{\large\scshape\filcenter}{\thesubsection}{1ex}{}

\numberwithin{equation}{section}
\newtheoremstyle{thms}{1em}{0pt}{\itshape}{}{\itshape\bfseries}{. ----}{ }{\thmname{#1}\xspace\thmnumber{#2}\thmnote{ \normalfont(#3)}}
\theoremstyle{thms}
\newaliascnt{Thm}{equation}							%Theorem
\newtheorem{Thm}[Thm]{Theorem}
\aliascntresetthe{Thm}

\newaliascnt{Prop}{equation}						%Proposition

\aliascntresetthe{Prop}

\newaliascnt{Lemma}{equation}						%Lemma
\newtheorem{Lemma}[Lemma]{Lemma}
\aliascntresetthe{Lemma}

\newaliascnt{Cor}{equation}							%Corollary
\newtheorem{Cor}[Cor]{Corollary}
\aliascntresetthe{Cor}

\newaliascnt{Conj}{equation}						%Conjecture

\aliascntresetthe{Conj}

\newaliascnt{Question}{equation}					%Question

\aliascntresetthe{Question}

\newtheoremstyle{defs}{1em}{0pt}{}{}{\itshape\bfseries}{. ----}{ }{\thmname{#1} \thmnumber{#2}}
\theoremstyle{defs}
\newaliascnt{Rmk}{equation}							%Remark

\aliascntresetthe{Rmk}

\newaliascnt{Fact}{equation}						%Fact

\aliascntresetthe{Fact}

\newaliascnt{Def}{equation}							%Definition
\newtheorem{Def}[Def]{Definition}
\aliascntresetthe{Def}

\newaliascnt{Ex}{equation}							%Example

\aliascntresetthe{Ex}

\newaliascnt{Con}{equation}							%Construction

\aliascntresetthe{Con}

\newaliascnt{Not}{equation}							%Notation

\aliascntresetthe{Not}

\newaliascnt{Setup}{equation}						%Setup

\aliascntresetthe{Setup}

\newaliascnt{Picture}{equation}						%Picture

\aliascntresetthe{Picture}

\newtheoremstyle{par}{1em}{0pt}{}{}{\itshape\bfseries}{. ----}{ }{\thmnumber{#2}}
\theoremstyle{par}
\newaliascnt{Par}{equation}							%Paragraph

\aliascntresetthe{Par}

\theoremstyle{thms}
\newtheorem{thm}{Theorem}
\newtheorem*{thm*}{Theorem}

\newtheorem*{lemma*}{Lemma}

\LetLtxMacro\oldproof\proof	%Proof without extra skip before and after
\renewcommand{\proof}[1][Proof]{\oldproof[#1]\unskip}
\LetLtxMacro\oldendproof\endproof
\renewcommand{\endproof}{\oldendproof\unskip}

\newenvironment{itemize*} %no longer needed because of \setlist below?
  {\begin{itemize}
    \setlength{\itemsep}{1em}
    \setlength{\parskip}{-1em}
    \setlength{\topsep}{0pt}
    \setlength{\partopsep}{0pt}}
  {\end{itemize}}
  
\newenvironment{enumerate*}
  {\begin{enumerate}
    \setlength{\itemsep}{1em}
    \setlength{\parskip}{-1em}
    \setlength{\topsep}{0pt}
    \setlength{\partopsep}{0pt}}
  {\end{enumerate}}
  
\setlist{itemsep=0em,topsep=0cm,partopsep=0em,parsep=\lineskip}

\setlist[enumerate]{label=\normalfont(\alph*), align=left, leftmargin=3em, labelwidth=1em, itemindent=0pt, listparindent=0pt, labelindent=1em, labelsep=*}

\setlist[itemize]{leftmargin=1.3em}

\DeclareMathOperator{\Spec}{Spec}
\DeclareMathOperator{\Sym}{Sym}
\DeclareMathOperator{\Pic}{Pic}
\DeclareMathOperator{\Hom}{Hom}
\DeclareMathOperator{\Bl}{Bl}

\newcommand{\tors}{_{\operatorname{tors}}}
\newcommand{\tf}{_{\operatorname{tf}}}

\newcommand{\punct}[1]{\makebox[0pt][l]{\,#1}} %so that full stops do not mess with tikz layout

\tikzcdset{arrow style=math font}

\let\OLDthebibliography\thebibliography
\renewcommand\thebibliography[1]{
  \OLDthebibliography{#1}
  \setlength{\parskip}{0pt}
  \setlength{\itemsep}{0pt plus 0.1em}
}

\begin{document}

\renewcommand{\sectionautorefname}{Section}
\renewcommand{\subsectionautorefname}{Subsection}		%Subsection

\begin{center}
\vspace*{-3.7em}
\noindent\makebox[\linewidth]{\rule{15cm}{0.4pt}}
\vspace{-.6em}

{\LARGE{\textbf{Divisors in projective bundles over the projective line whose complement is affine space}}}

\vspace{.4em}

{\textsc{Remy van Dobben de Bruyn}}

\vspace*{.2em}
\leavevmode
\noindent\makebox[\linewidth]{\rule{11cm}{0.4pt}}

\vspace{.5em}
\end{center}

\renewcommand{\abstractname}{\small\bfseries\scshape Abstract}

\begin{abstract}\noindent% should not contain macros when uploaded to arXiv
Given a smooth projective variety~$X$ of dimension~$n$ and a closed subscheme \makebox{$Z \subseteq X$}, it is in general a difficult problem to determine whether $X \setminus Z$ is isomorphic to~$\mathbf A^n$. In the case where~$X$ is a projective bundle over~$\mathbf P^1$ and the restriction of every irreducible component of~$Z$ to every fibre is a hyperplane, we give a complete geometric characterisation in terms of the irreducible components of~$Z$ and their intersections. In the appendix, we use this to obtain a coordinate-free explanation for the recent first counterexample to the Jacobian conjecture.
\end{abstract}\vspace{.2em}

\section{Introduction}
The recognition of affine spaces~$\mathbf A^n$ is an old and well-studied subject; see for instance \cite{Ramanujam}, \cite{MiyanishiAffine2}, \cite{MiyanishiAffine3}, \cite{DasguptaGupta} for general results in low dimension. In this paper, we explain a version for complements of divisors in projective bundles~$\mathbf P(\mathscr E)$ over~$\mathbf P^1$ (over an arbitrary field). Recalling that $\Pic(\mathbf P(\mathscr E)) \cong \mathbf Zf \oplus \mathbf Zh$ where $f = \pi^*\mathcal O_{\mathbf P^1}(1)$ is a fibre and $h = \mathcal O_{\mathbf P(\mathscr E)}(1)$ is the relative tautological bundle, we prove the following result in the case of divisors whose restriction to each fibre is a hyperplane:

\begin{thm}\label{thm main}
Let~$\mathscr E$ be a vector bundle of rank~$n \geq 2$ on~$\mathbf P^1$, and let $\pi \colon X = \mathbf P(\mathscr E) \to \mathbf P^1$ be the projection. Let $D_1,\ldots,D_s$ be pairwise distinct irreducible divisors on~$\mathbf P(\mathscr E)$ with $D_i \sim d_if + h$ for $d_i \in \mathbf Z$, and let $U = X \setminus (D_1 \cup \ldots \cup D_s)$. Then $U \cong \mathbf A^n$ if and only if $s = 2$, $\lvert d_2 - d_1 \rvert = 1$, and there is exactly one point $p \in \mathbf P^1$ such that the fibres $D_1 \cap \pi^{-1}(p)$ and $D_2 \cap \pi^{-1}(p)$ agree.
\end{thm}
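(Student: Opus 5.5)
We would split the proof into necessity and sufficiency. For necessity we use three invariants that $\mathbf A^n$ has: trivial Picard group, only constant units, and the point count (or compactly supported $\ell$-adic Euler characteristic) of affine space.

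\emph{Picard group and units.} There is an exact sequence
\[
\textstyle\bigoplus_i \mathbf Z D_i \to \Pic(X) \to \Pic(U) \to 0 .
\]
If $U \cong \mathbf A^n$, then $\Pic(U) = 0$. So the classes $d_if+h$ must generate $\Pic(X) = \mathbf Zf \oplus \mathbf Zh$.

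Next, $\mathcal O(U)^\times = k^\times$. The kernel of the map $\bigoplus_i \mathbf Z D_i \to \Pic(X)$ is exactly the group of units of $U$ modulo constants. Hence this map must also be injective, so it is an isomorphism $\mathbf Z^s \cong \mathbf Z^2$. This forces $s = 2$ and
\[
\det\begin{pmatrix} d_1 & 1 \\ d_2 & 1\end{pmatrix} = \pm 1, \quad\text{i.e. } \lvert d_2 - d_1\rvert = 1 .
\]
Without loss of generality $d_2 = d_1 + 1$.

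\emph{Counting coinciding fibres.} Since $D_1 \neq D_2$ are irreducible and are relative hyperplanes, their fibres agree only over a finite set of points $p_1,\dots,p_m$. Their number is governed by $D_2|_{D_1} \sim (d_1+1)f + h$. Fibrewise, the complement of two distinct hyperplanes in $\mathbf P^{n-1}$ is $\mathbf A^{n-2}\times\mathbf G_m$, and the complement of one hyperplane is $\mathbf A^{n-1}$. Hence (after spreading out to a finite field, or directly with compactly supported Euler characteristics / classes in $K_0(\mathrm{Var})$) the class of $U$ is
\[
(q+1-m)(q-1)q^{n-2} + m\,q^{n-1} = q^n + (m-1)q^{n-2}.
\]
The same computation over $\mathbf P^1$ minus the points where the fibres coincide handles a general base field. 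Comparing with $q^n$ gives $m = 1$. Some care is needed to make this rigorous over an arbitrary field: the stratification is by locally trivial fibrations, since the relevant bundles over $\mathbf P^1 \setminus\{p_i\}$ are Zariski-locally trivial, and the points $p_i$ may be non-rational. In the latter case we should really count degrees, and the conclusion becomes that there is a single rational point.

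\emph{Sufficiency.} Assume $d_2 = d_1+1$ and that the fibres agree exactly over $p = \infty$, in a hyperplane $H \subset \pi^{-1}(\infty)$. We perform the elementary transformation of $\mathscr E$ along $H$: blow up $H$ (codimension $2$ in $X$), then contract the strict transform of $\pi^{-1}(\infty)$. This produces $X' = \mathbf P(\mathscr E')$ in which the strict transforms satisfy $D_1' \sim D_2'$, both of class $h' + d f'$. The effect is that the coincidence over $\infty$ is absorbed, and $U$ is identified with the complement in $X'$ of $D_1' \cup D_2'$ together with some fibre data. The pencil spanned by $D_1'$ and $D_2'$ then gives a morphism $U \to \mathbf A^1$.

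We would instead aim to show directly that $U \setminus \pi^{-1}(\infty) \to \mathbf A^1$ is isomorphic to $\mathbf A^1 \times (\mathbf A^{n-2}\times \mathbf G_m)$. The fibre over $\infty$, an $\mathbf A^{n-1}$, then glues in to fill the $\mathbf G_m$-direction, giving $\mathbf A^n$. Concretely, the rational function $s_2/(t\, s_1)$, where $t$ is a coordinate vanishing at $\infty$, should be a regular function on $U$ that serves as the missing coordinate.

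Verifying that this construction yields a global isomorphism is the main obstacle. The hardest part is checking that the function extends across $\pi^{-1}(\infty)$ and that the resulting map to $\mathbf A^n$ is bijective and étale, hence an isomorphism. We would first try to carry this out by trivialising $\mathscr E$ as a sum of line bundles and choosing explicit coordinates in which $D_1 = \{x_0 = 0\}$ and $D_2 = \{t\,x_0 + x_1 = 0\}$, up to the twist.
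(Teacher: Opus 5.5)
Your necessity argument is fine and is essentially the paper's. The unit/Picard sequence forces $s=2$ and $\lvert d_2-d_1\rvert=1$, and the class $\mathbf L^n+(r-1)\mathbf L^{n-2}$ forces $r=1$. You stratify by fibres where the paper does inclusion--exclusion on $X$, $D$, $E$, $D\cap E$. Your remark that the coincidence point must be rational is a fair refinement, since the paper's count tacitly treats the points of $W$ as rational.

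The gap is in sufficiency, which is the substantive half. Your sketch does not prove it, and its concrete steps fail.
\begin{enumerate}
\item The elementary transformation contracts the strict transform of $C=\pi^{-1}(\infty)$. But $C\cap U=C\setminus H\cong\mathbf A^{n-1}$ lies in $U$, so $U$ does not survive as an open subset of $X'$.
\item You also get the classes wrong: $\widetilde D_2-\widetilde D_1=\beta^*(D_2-D_1)\sim\widetilde C+G$, with $G$ the exceptional divisor, and this pushes forward to $f'$. So $D_2'\sim D_1'+f'$, not $D_1'\sim D_2'$.
\item The coincidence is not absorbed in general: in rank $2$, $D_1'\cdot D_2'=D_1\cdot D_2-1$.
\item Your candidate coordinate $s_2/(ts_1)$ has divisor $D_2-D_1-C$, hence a pole along $C\cap U$. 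The regular function is its inverse $u=ts_1/s_2$, which is the paper's pencil spanned by $C+D$ and $E$.
\item Even with $u$, the other coordinates of $U\setminus C\cong\mathbf A^1\times\mathbf A^{n-2}\times\mathbf G_m$ do not extend to coordinates across $C\cap U$. The required correction depends on the contact order of $D_1$ and $D_2$ over $\infty$, which is unbounded.
\end{enumerate}
For instance, take the Hirzebruch surface glued from $\mathbf A^1_t\times\mathbf P^1$ and $\mathbf A^1_s\times\mathbf P^1$ via $s=1/t$, $y'=t^ky$, where $y=x_1/x_0$. Your normal form $D_1=\{x_0=0\}$, $D_2=\{tx_0+x_1=0\}$ then has contact order $k+1$ over $s=0$. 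An isomorphism $U\cong\mathbf A^2$ is given by $u=1/(t+y)$ and $w=y(t+y)^k$; near $s=0$ the latter equals $y'(1+s^{k+1}y')^k$. By contrast, $t$ has a pole along $C$, and $y=s^ky'$ vanishes on $C\cap U$ when $k\geq 1$.

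The paper supplies the two ingredients you are missing.
\begin{enumerate}
\item It reduces to $n=2$. Projection away from $\mathbf P(\mathscr H\tf)$ makes $U\to U'$ a torsor under a vector bundle, by the lemma identifying the blowup with a projective bundle. This torsor is trivial once $U'\cong\mathbf A^2$, because $U'$ is affine with trivial Picard group.
\item For $n=2$ it resolves the pencil $\langle C+D,E\rangle$ by $m=E^2$ successive blowups starting at $q$, where $D\cdot E=m-1$ is the contact order. It then contracts the chain $\widetilde D,\widetilde E_{m-1},\ldots,\widetilde E_1$ lying in the fibre over $[0:1]$. The result is a $\mathbf P^1$-bundle over $\mathbf P^1$ in which $U$ is the complement of a fibre (the image of $\widetilde E$) and a section (the image of $E_m$), hence an $\mathbf A^1$-bundle over $\mathbf A^1$.
\end{enumerate}
This treats arbitrary contact order uniformly, and that is exactly what your proposal has no mechanism for.
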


See \autoref{Thm criterion} and the remarks preceding it.

\subsection*{Relation to the Jacobian conjecture}
On 19 July 2026, Levent Alp\"oge posted a counterexample to the three-dimensional Jacobian conjecture on the social media platform~X, found using AI \cite{AlpogeX}. As explained by a comment of Andy Jiang one day later (also found using AI) \cite{JiangX}, the example can be understood geometrically by taking a ramified cover $\varphi \colon \mathbf P^1 \times \mathbf P^2 \to \mathbf P^3$ and removing a suitably chosen hyperplane~$H$ (resp.~the ramification divisor~$R$ and~$\varphi^{-1}(H)$) from the source (resp.~target). It is clear that~$\mathbf P^3 \setminus H$ is isomorphic to~$\mathbf A^3$, and there was a lot of online activity to understand why $(\mathbf P^1 \times \mathbf P^2) \setminus (R \cup \varphi^{-1}(H))$ is isomorphic to~$\mathbf A^3$; see for instance \cite{LittX} and the blog posts \cite{SBS}, \cite{TaoBlog}, \cite{LevinsonBlog}, as well as the blog posts \cite{vDdBBlog1,vDdBBlog2} and expository paper \cite{vDdBExpo} by the author.

When $n = 3$ and~$\mathscr E$ is the trivial bundle~$\mathcal O^3$, the theorem above gives a criterion on~$R$ and~$\varphi^{-1}(H)$ in order for their complement to be isomorphic to~$\mathbf A^3$. In the appendix, we use \autoref{thm main} to show that these criteria are satisfied if and only if~$H$ is tangent but not osculating to the small diagonal (the same condition appearing in \cite{JiangX})\footnote{The `only if' part is not difficult, but many of the online explanations do not address it, and sometimes only show the existence of a \emph{single} hyperplane~$H$ with the required properties.}. In particular, this gives a coordinate-free explanation of the counterexample of \cite{AlpogeX} and \cite{JiangX}. We include this in the paper because it does not yet seem to appear in the published literature.

\subsection*{Outline of the paper}
In \hyperref[Sec projective]{\S2}, we start with some lemmas on projective bundles for which we do not know a reference, and which may be of independent interest. This is used in \hyperref[Sec criterion]{\S3} to prove \autoref{thm main}. 

Proving necessity of the conditions of the theorem is relatively straightforward, and only uses the cohomology of~$U$ and its class in the Grothendieck ring of varieties. The more surprising fact is that the conditions are in fact sufficient for proving that $U \cong \mathbf A^n$. To prove this, we first reduce to the rank~$2$ case by applying a linear projection. Then~$X$ is a rational surface, and we use ideas from  birational geometry of surfaces to construct an isomorphism $U \cong \mathbf A^2$.

In special cases, the isomorphism $U \cong \mathbf A^n$ was noted in relation to the counterexample to the Jacobian conjecture; see for instance the comments \cite{JiangX} and \cite{LittX} on~X, as well as the blog posts \cite{SBS}, \cite{vDdBBlog1}, \cite{vDdBBlog2}. Thus, \autoref{thm main} puts this construction in a broader context.

\subsection*{Acknowledgements}
This work was supported by ERC Horizon grant no.~101042990. No AI was used by the author in the preparation of this paper (nor in the preparation of its precursors \cite{vDdBBlog1}, \cite{vDdBBlog2}, and \cite{vDdBExpo}). I am grateful to Charles Newton for pointing out a mistake in an earlier version of this paper.

\section{Some lemmas on projective bundles}\label{Sec projective}
If~$S$ is a scheme and~$\mathscr F$ a quasi-coherent sheaf on~$S$, we will write $\mathbf P(\mathscr F) = \mathbf{Proj}_S(\Sym^*\mathscr F)$. Note that this is the projective space of 1-dimensional quotients (rather than subspaces) of~$\mathscr F$: for an $S$-scheme $f \colon T \to S$, morphisms $T \to \mathbf P(\mathscr F)$ over~$S$ correspond to rank~$1$ locally free quotients $f^*\mathscr F \twoheadrightarrow \mathscr L$ on~$T$ \cite[Prop.~4.2.3]{EGA2}. Likewise, we write~$\mathbf A(\mathscr F)$ for $\mathbf{Spec}_S(\Sym^*(\mathscr F))$, whose points on $f \colon T \to S$ are given by maps $f^*\mathscr F \to \mathcal O_T$ (rather than global sections $\mathcal O_T \to f^*\mathscr F$). Below, a \emph{vector bundle} on a scheme~$S$ will mean a locally free sheaf of finite type on~$S$ (this is called a `finite locally free sheaf' in \cite[Tag \href{https://stacks.math.columbia.edu/tag/01C6}{01C6}]{Stacks}).

\begin{Lemma}\label{Lem ses}
Let~$S$ be a scheme, let $0 \to \mathscr E \to \mathscr F \to \mathscr G \to 0$ be a short exact sequence of quasi-coherent sheaves on~$S$, and let $\pi \colon \mathbf P(\mathscr F) \to S$ be the projection. Then the ideal sheaf cutting out the closed immersion $\mathbf P(\mathscr G) \hookrightarrow \mathbf P(\mathscr F)$ is the image of the composite map \makebox{$\pi^*\mathscr E \otimes \mathcal O_{\mathbf P(\mathscr F)}(-1) \to \pi^*\mathscr F \otimes \mathcal O_{\mathbf P(\mathscr F)}(-1) \twoheadrightarrow \mathcal O_{\mathbf P(\mathscr F)}$} induced by the tautological quotient $\pi^*\mathscr F \twoheadrightarrow \mathcal O_{\mathbf P(\mathscr F)}(1)$. If~$\mathscr E$ is invertible, then the map $\pi^*\mathscr E \otimes \mathcal O_{\mathbf P(\mathscr F)}(-1) \to \mathcal O_{\mathbf P(\mathscr F)}$ is injective.
\end{Lemma}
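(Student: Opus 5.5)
The question is local on~$S$, so I will reduce to $S = \Spec A$ affine, with $\mathscr E,\mathscr F,\mathscr G$ corresponding to modules $E \subseteq F \twoheadrightarrow G = F/E$. Then $\mathbf P(\mathscr F) = \operatorname{Proj}(\Sym^* F)$. The closed immersion $\mathbf P(\mathscr G) \hookrightarrow \mathbf P(\mathscr F)$ comes from the surjection of graded algebras $\Sym^* F \twoheadrightarrow \Sym^* G$. By right exactness of $\Sym^*$, its kernel is the homogeneous ideal $I = E \cdot \Sym^{*}F$ generated in degree~$1$ by the image of~$E$. The ideal sheaf of the closed immersion is therefore $\widetilde I$, and its degree-$d$ part is the image of $E \otimes \Sym^{d-1}F \to \Sym^d F$.

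Next I would identify $\widetilde I$ with the image of the given map. On the standard open $D_+(x)$ with $x \in F$, the sheaf $\widetilde I$ is generated by the elements $e/x$ with $e \in E$. The map $\pi^*\mathscr E \to \pi^*\mathscr F \twoheadrightarrow \mathcal O(1)$ sends $e$ to the degree-$1$ section~$e$ of~$\mathcal O(1)$. Twisting by $\mathcal O(-1)$, which on $D_+(x)$ is trivialised by $1/x$, sends $e \otimes x^{-1}$ to $e/x \in \mathcal O$. So the images agree on each $D_+(x)$. Alternatively, I can phrase this via the standard fact that the image of $\pi^*\mathscr E\otimes\mathcal O(-1)\to\mathcal O$ is the sheaf associated to $E\cdot \Sym^{*}F(-1)\to \Sym^*F$, using exactness of $\widetilde{(\cdot)}$ on graded modules. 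This step is routine.

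For injectivity, assume $\mathscr E$ is invertible, and shrink further so that $E = A e$ is free of rank one. The map becomes $\mathcal O(-1) \xrightarrow{\cdot e} \mathcal O$, where $e$ is viewed as a global section of~$\mathcal O(1)$, and I need $e$ to be a nonzerodivisor locally. On $D_+(x)$ this is multiplication by $e/x$ on $(\Sym^*F)_{(x)}$. Here lies the main obstacle, since $F$ is only quasi-coherent, not flat, and $e$ need not be a nonzerodivisor in $\Sym^*F$ a priori.

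My approach is to work locally on~$S$ near a point~$s$ and choose the standard opens cleverly. The key fact is that $e$ generates a locally split line: $E\to F$ is injective with locally-free image of rank one, and at least on the open cover I take $D_+(x)$ with $x$ ranging over all of~$F$. On $D_+(e)$ multiplication by $e/e=1$ is clearly injective. For general $x$, I will use the grading: $e/x$ is a nonzerodivisor in the localisation $(\Sym^*F)_x$ if $e$ is a nonzerodivisor in $\Sym^*F$. I would prove the latter by writing $\Sym^*F$ as a quotient of a polynomial ring over $\Sym^*G$-type filtrations. Concretely, the filtration by powers of~$e$ gives a surjection $\Sym^*G[t] \to \operatorname{gr}(\Sym^*F)$ with $t\mapsto e$, and I would aim to show it is an isomorphism using $E\cong A$. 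Then $t$ is a nonzerodivisor in the associated graded, hence $e$ is one in $\Sym^*F$.

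If that argument proves delicate in full generality, I would fall back on pulling back along the $\mathbf A^1$-torsor structure: locally $\Sym^*F$ is a filtered colimit of $\Sym^*$ of finitely presented modules containing~$e$, reducing to that case.
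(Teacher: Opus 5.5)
Your treatment of the first assertion matches the paper's argument. The paper likewise uses that the kernel of $\Sym^*F \to \Sym^*G$ is generated in degree one by $E$ (citing Bourbaki), then applies the exact, twist-compatible functor $\widetilde{(\cdot)}$; your explicit check on the opens $D_+(x)$ is fine.

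\textbf{The injectivity part has a genuine gap, and it cannot be repaired as planned.} The step that fails is showing that $\Sym^*G[t] \to \operatorname{gr}(\Sym^*F)$ is an isomorphism ``using $E\cong A$'', i.e.\ that $e$ is a nonzerodivisor on $\Sym^*F$. In fact the final assertion of the lemma is false for general quasi-coherent $\mathscr F$, even finitely presented ones, so your filtered-colimit fallback cannot help either.

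Take $A = k[t]$, $F = Au \oplus (A/t)v$, and $E = A \to F$, $1 \mapsto tu$, which is injective.
\begin{itemize}
\item Then $\Sym^*F = k[t][U,V]/(tV)$, and $e = tU$ satisfies $eV = U\cdot tV = 0$ with $V \neq 0$.
\item The sheaf map itself is not injective either. On $D_+(V)$ we have $t = 0$, so $\mathcal O(D_+(V)) = k[U/V] \neq 0$. After trivialising $\mathcal O(-1)$ by $V^{-1}$, the map $\mathcal O(-1) \to \mathcal O$ there is multiplication by $tU/V = 0$.
\item Geometrically, $e$ vanishes in the fibre $F \otimes k(0)$. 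So the whole fibre $\mathbf P^1_k$, which is an irreducible component of $\mathbf P(F)$, lies inside $\mathbf P(G)$.
\end{itemize}
Your remark that $e$ ``generates a locally split line'' is where injectivity of $E \to F$ got conflated with local splitting. Local splitting would make everything trivial (locally $\Sym^*F \cong \Sym^*G[t]$ with $e = t$), but it is not among the hypotheses.

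What is missing is an extra hypothesis. The paper's proof does not supply the argument either: it simply asserts that $\mathscr E(-1)\otimes\Sym^*\mathscr F \to \Sym^{\geq 1}\mathscr F$ is injective when $\mathscr E$ is invertible, and the example above refutes this. So you were right that this is the crux.

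Every use of the injectivity in the paper (\autoref{Lem torsor} and \autoref{Lem projection blowup}) has $\mathscr F$, and in fact also $\mathscr G$, locally free of finite rank. In that case the sequence is locally split and your $\operatorname{gr}$ argument becomes trivial.

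If only $\mathscr F$ is assumed locally free of finite rank, the right tool is McCoy's theorem:
\begin{itemize}
\item Locally, $\Sym^*F = A[T_1,\ldots,T_n]$ and $e = \sum a_iT_i$.
\item Injectivity of $E \to F$ says that no nonzero $c \in A$ kills all the $a_i$.
\item Hence $e$ is a nonzerodivisor in the polynomial ring, so $e/x$ is a nonzerodivisor on each $(\Sym^*F)_{(x)}$. This gives injectivity on every $D_+(x)$.
\end{itemize}
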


\begin{proof}
The map $\mathbf P(\mathscr G) \hookrightarrow \mathbf P(\mathscr F)$ is induced by the surjection $\Sym^*\mathscr F \to \Sym^*\mathscr G$ of graded algebras, whose kernel is generated by the image of the map $\mathscr E \to \mathscr F \hookrightarrow \Sym^*\mathscr F$ \cite[Ch.~III, \S6, Prop.~4]{BourbakiAlgebre}. Thus, the kernel is the image of the map of quasi-coherent sheaves of graded $\Sym^*\mathscr F$-modules given by $\mathscr E(-1) \otimes \Sym^*\mathscr F \subseteq \mathscr F(-1)\otimes \Sym^*\mathscr F \to \Sym^{\geq 1}\mathscr F$, and this composite map is injective if~$\mathscr E$ is invertible. The result now follows since the functor $\mathscr M \mapsto \widetilde{\mathscr M}$ from quasi-coherent sheaves of graded $\Sym^*\mathscr F$-modules to sheaves on $\mathbf P(\mathscr F)$ is exact and preserves Serre twists \cite[Tag \href{https://stacks.math.columbia.edu/tag/01O7}{01O7} and \href{https://stacks.math.columbia.edu/tag/01MT}{01MT}]{Stacks}.
\end{proof}

\begin{Lemma}\label{Lem torsor}
Let~$S$ be a scheme, let $0 \to \mathscr E \to \mathscr F \to \mathscr G \to 0$ be a short exact sequence of vector bundles on~$S$ such that~$\mathscr E$ has rank~$1$, and let $X = \mathbf P(\mathscr F) \setminus \mathbf P(\mathscr G)$. If $f \colon T \to S$ is a morphism, then~$X(T)$ is the set of splittings of the short exact sequence $0 \to f^*\mathscr E \to f^*\mathscr F \to f^*\mathscr G \to 0$. In particular,~$X$ is naturally a torsor under the $S$-group scheme $\mathbf A(\mathscr E^{-1} \otimes \mathscr G)$.
\end{Lemma}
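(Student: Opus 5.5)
The plan is to describe $X(T)$ through the functor of points of $\mathbf P(\mathscr F)$ and then use \autoref{Lem ses} to identify which points avoid $\mathbf P(\mathscr G)$. A $T$-point of $\mathbf P(\mathscr F)$ is a rank~$1$ locally free quotient $q \colon f^*\mathscr F \twoheadrightarrow \mathscr L$. Such a point lies in the open subscheme $X$ exactly when the pullback along $g \colon T \to \mathbf P(\mathscr F)$ of the ideal sheaf of $\mathbf P(\mathscr G)$ generates $\mathcal O_T$.

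By \autoref{Lem ses}, that ideal is the image of $\pi^*\mathscr E \otimes \mathcal O(-1) \to \mathcal O$. Its pullback along $g$ is the image of $f^*\mathscr E \otimes \mathscr L^{-1} \to \mathcal O_T$, which is the twist of the composite $f^*\mathscr E \to f^*\mathscr F \xrightarrow{q} \mathscr L$. Pullback of the ideal generates the pulled-back ideal sheaf because pullback is right exact. So $g$ factors through $X$ if and only if this map of line bundles is surjective, and a surjection of invertible sheaves is an isomorphism.

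This gives a bijection between $X(T)$ and quotients $q \colon f^*\mathscr F \to \mathscr L$, up to isomorphism of $\mathscr L$, whose restriction to $f^*\mathscr E$ is an isomorphism. Normalising $\mathscr L = f^*\mathscr E$ via this isomorphism, such $q$ are exactly the retractions $r \colon f^*\mathscr F \to f^*\mathscr E$ of the inclusion. Any such $r$ is automatically surjective, and the normalisation removes the ambiguity in $\mathscr L$. Retractions correspond bijectively to splittings of the pulled-back sequence, which is still exact because $\mathscr G$ is locally free. I expect the only delicate step to be this identification of the ideal and its pullback; it is handled directly by \autoref{Lem ses}.

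For the torsor structure, two retractions differ by a map $f^*\mathscr F \to f^*\mathscr E$ vanishing on $f^*\mathscr E$, that is, by an element of $\Hom(f^*\mathscr G, f^*\mathscr E)$. Adding such maps is a simply transitive action whenever a splitting exists. This hom-set is $\Hom(f^*(\mathscr E^{-1}\otimes\mathscr G), \mathcal O_T) = \mathbf A(\mathscr E^{-1}\otimes\mathscr G)(T)$, and the action is functorial in $T$. Finally, a splitting exists Zariski locally on $S$, since the sequence of vector bundles splits locally, and this gives local sections of $X \to S$. Hence $X$ is a torsor.
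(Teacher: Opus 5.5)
Your proof is correct and follows the same route as the paper. You use \autoref{Lem ses} to identify the pulled-back ideal of $\mathbf P(\mathscr G)$ with the image of $f^*\mathscr E \otimes \mathscr L^{-1} \to \mathcal O_T$. You then characterise $T$-points of $X$ as quotients $f^*\mathscr F \twoheadrightarrow \mathscr L$ restricting to an isomorphism on $f^*\mathscr E$, and identify these with retractions, hence with splittings.

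Your remark that a surjection of invertible sheaves is automatically an isomorphism is cleaner than the paper's claim that this map is ``always injective'' by flatness. That claim fails when $g$ lands in $\mathbf P(\mathscr G)$, since the composite is then zero, but injectivity is only needed where the map is surjective, so the paper's conclusion is unaffected. You also check the Zariski-local existence of splittings, which is needed for $X$ to be an honest torsor and which the paper leaves implicit.
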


\begin{proof}
Given a morphism $f \colon T \to S$, a morphism $g \colon T \to \mathbf P(\mathscr F)$ is classified by the surjection $f^*\mathscr F \twoheadrightarrow g^*\mathscr O_{\mathbf P(\mathscr F)}(1)$. By \autoref{Lem ses}, the subscheme $g^*\mathbf P(\mathscr G)$ is cut out by the image of the composite
\begin{equation}
f^*\mathscr E \otimes g^*\mathscr O_{\mathbf P(\mathscr F)}(-1) \to f^*\mathscr F \otimes g^*\mathscr O_{\mathbf P(\mathscr F)}(-1) \twoheadrightarrow \mathcal O_T,\label{Eq composite}
\end{equation}
so~$g$ lands in $\mathbf P(\mathscr F) \setminus \mathbf P(\mathscr G)$ if and only if the map \eqref{Eq composite} is surjective. By the final statement of \autoref{Lem ses} and since $\mathcal O_{\mathbf P(\mathscr G)}$ is flat over~$S$, the map \eqref{Eq composite} is always injective, so $\mathbf P(\mathscr F) \setminus \mathbf P(\mathscr G)$ classifies the maps $f^*\mathscr F \twoheadrightarrow \mathscr L$ onto a line bundle such that $f^*\mathscr E \to f^*\mathscr F \to \mathscr L$ is an isomorphism. These are exactly the sections of $f^*\mathscr E \to f^*\mathscr F$, proving the first statement. It is clear that this is a torsor under the functor taking $f \colon T \to S$ to $\Hom_T(f^*\mathscr G,f^*\mathscr E)$, which in our conventions is represented by $\mathbf A(\mathscr E^{-1} \otimes \mathscr G)$.
\end{proof}

\begin{Lemma}\label{Lem divisor P(E)}
Let~$S$ be an integral scheme, let~$\mathscr E$ be a locally free sheaf on~$S$ of rank~$n$, and let $X = \mathbf P(\mathscr E)$ with its projection $\pi \colon X \to S$. Then the effective Cartier divisors~$D$ on~$X$ with $\mathcal O_X(D) \simeq \pi^*\mathscr L^{-1} \otimes \mathcal O_{\mathbf P(\mathscr E)}(1)$ for a line bundle $\mathscr L \in \Pic(S)$ are given by the inclusions $\mathbf P(\mathscr F) \hookrightarrow \mathbf P(\mathscr E)$ induced by short exact sequences $0 \to \mathscr L \to \mathscr E \to \mathscr F \to 0$. If $n \geq 2$, then~$D$ is irreducible if and only if~$\mathscr F$ is torsion-free.
\end{Lemma}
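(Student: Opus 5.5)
An effective Cartier divisor $D$ with $\mathcal O_X(D) \simeq \pi^*\mathscr L^{-1} \otimes \mathcal O_X(1)$ is the zero locus of a nonzero section $s \in H^0(X, \pi^*\mathscr L^{-1}\otimes \mathcal O_X(1))$, well defined up to a unit. By the projection formula and $\pi_*\mathcal O_X(1) = \mathscr E$, such a section is the same as a map $\sigma\colon \mathscr L \to \mathscr E$. So the plan is to match these maps with short exact sequences, apply \autoref{Lem ses} to identify the zero locus, and then read off irreducibility from torsion in the cokernel.

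\textbf{From a sequence to a divisor.} Given $0 \to \mathscr L \to \mathscr E \to \mathscr F \to 0$, \autoref{Lem ses} (with $\mathscr L$ invertible) says that the ideal of $\mathbf P(\mathscr F) \hookrightarrow \mathbf P(\mathscr E)$ is the image of an injective map $\pi^*\mathscr L \otimes \mathcal O_X(-1) \to \mathcal O_X$. That image is an invertible ideal, so $\mathbf P(\mathscr F)$ is an effective Cartier divisor $D$. Its ideal is $\mathcal O_X(-D) \simeq \pi^*\mathscr L\otimes\mathcal O_X(-1)$, which gives the required class.

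\textbf{From a divisor to a sequence.} Conversely, $D$ gives a nonzero map $\sigma\colon\mathscr L\to\mathscr E$. Since $S$ is integral and $\mathscr L$ is invertible, $\sigma$ is injective: it is nonzero at the generic point, and the kernel is a subsheaf of a torsion-free sheaf that vanishes generically. Set $\mathscr F = \operatorname{coker}\sigma$. The ideal of $\mathbf P(\mathscr F)$ is the image of $\pi^*\mathscr L(-1)\to\mathcal O_X$, which is exactly the map given by $s$. Hence $D = V(s) = \mathbf P(\mathscr F)$. The two constructions are mutually inverse up to the evident equivalences (scaling $\sigma$ by units, isomorphisms of $\mathscr L$), so the correspondence is bijective.

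\textbf{Irreducibility.} This is the main step. Let $\eta$ be the generic point of $S$. If $n \geq 2$, the generic fibre $\mathbf P(\mathscr F_\eta)$ is a projective space of dimension $n-2\geq 0$. It is nonempty and irreducible, and its closure $D_0$ in $X$ is an irreducible component of $D$.

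\emph{If $\mathscr F$ is torsion-free:} I want to show $D = D_0$. Choose a local trivialisation. Then $D$ is cut out in $\mathbf P^{n-1}_A$ by a single linear form $\sum a_i x_i$, where $A$ is a domain. The points of $D$ not in $D_0$ lie over points $s$ with $\dim D_s$ jumping, that is, over the locus where all $a_i$ vanish. Torsion-freeness of $\mathscr F$ means that the $a_i$ do not all lie in a proper prime in a way that produces torsion. A cleaner route is the following. On an integral scheme, an effective Cartier divisor has associated points that are generic points of its components. Since $D = \mathbf P(\mathscr F)$ and $\mathbf P(\mathscr F) \to S$ is the Proj of $\Sym^*\mathscr F$, I would show that $\mathcal O_D$ has no sections supported over a proper closed subset of $S$ when $\mathscr F$ is torsion-free. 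The key is that torsion-freeness of $\mathscr F$ passes to $\Sym^*\mathscr F$: locally $\Sym^*\mathscr F = A[x_1,\dots,x_n]/(\ell)$, and $\ell$ being primitive (no common non-unit factor issue) makes this a domain, using $n\geq 2$. I expect this implication, torsion-free $\Rightarrow$ $\Sym$ a domain, to be the main obstacle. I would attack it via the criterion that $\ell$ generates a prime ideal in $A[x]$ if and only if $\ell$ is prime in $K[x]$ (true, since it is linear) and $A[x]/(\ell)$ is $A$-torsion-free. The latter holds because $A[x]/(\ell)$ is a filtered extension of symmetric powers of $\mathscr F$, and each of these is torsion-free: each $\Sym^k$ fits into a Koszul-type sequence $0\to\Sym^{k-1}\mathscr E\otimes\mathscr L\to\Sym^k\mathscr E\to\Sym^k\mathscr F\to0$, whose exactness and torsion-freeness can be checked by comparing with $\eta$.

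\emph{If $\mathscr F$ has torsion:} at a point $s$ in the support of the torsion, $\sigma(s)=0$. So $D \supseteq \pi^{-1}(\overline{\{s\}})$, which is not contained in $D_0$, because $D_0$ meets each fibre in a proper subset, a hyperplane of $\mathbf P^{n-1}$ when $n\ge 2$. Hence $D$ is reducible.
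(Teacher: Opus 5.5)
Your correspondence between divisors and short exact sequences is the same as the paper's. For irreducibility the paper argues globally. Since $\Pic(X) \cong \pi^*\Pic(S) \oplus \mathbf Zh$, any splitting $D = D_1 + D_2$ has a summand of $h$-degree $0$ pulled back from $S$. Such a splitting is a factorisation of $\mathscr L \hookrightarrow \mathscr E$ through a strictly larger line bundle, i.e.\ torsion in $\mathscr F$. Your local route is different, and both of its halves have a gap.

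\textbf{Torsion $\Rightarrow$ reducible.} Your claim that $D_0$ meets every fibre in a hyperplane is false once $\dim S \geq 2$. Take $S = \Spec k[u,v]$, $\mathscr L = \mathcal O$, $\mathscr E = \mathcal O^2$, $\sigma = (u,v)$: then $D = D_0$ is the blowup of the origin and contains the whole fibre over it. With $\sigma = (u^2,uv)$, the torsion of $\mathscr F$ is supported on $V(u) \ni 0$, yet $\pi^{-1}(0) \subseteq D_0$. So an arbitrary point of the torsion support gives no new component; you need a generic point of that support plus a dimension count.

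Over $S = \mathbf P^1$, the case used later, this is immediate. For a closed point $s$, the fibre $\pi^{-1}(s) \cong \mathbf P^{n-1}$ is irreducible of dimension $n-1 = \dim D_0$, and it differs from $D_0$ because $D_0$ dominates $S$.

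Over an arbitrary integral base there may be no extra component at all. On the non-Cohen--Macaulay cone $S = \Spec k[u^4,u^3v,uv^3,v^4]$ with $\sigma = (u^4,v^4)$, the section $u^2v^2\cdot(u^4,v^4) = (u^6v^2,u^2v^6)$ of $\mathcal O^2$ maps to a nonzero torsion element of $\mathscr F$. But the fibre of $D_0$ over the vertex is all of $\mathbf P^1$, so $D$ is topologically irreducible and merely non-reduced. In that generality ``irreducible'' must be read as ``integral'', and this direction has to be proved as non-integrality. For instance, a degree-one torsion element of $\Sym^*\mathscr F$ gives a nonzero local section of $\mathcal O_D(1)$ killed by some $c \neq 0$.

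\textbf{Torsion-free $\Rightarrow$ irreducible.} Your reduction is correct: $\ell$ is prime in $A[x]$ iff $A[x]/(\ell)$ is $A$-torsion-free. But ``torsion-freeness can be checked by comparing with $\eta$'' proves nothing. The sequence $0 \to \Sym^{k-1}\mathscr E \otimes \mathscr L \to \Sym^k\mathscr E \to \Sym^k\mathscr F \to 0$ is exact whether or not $\mathscr F$ has torsion (for $k=1$ it is the original sequence), and torsion is invisible at $\eta$.

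The missing step is a Gauss-lemma statement: for $0 \neq c \in A$, $\ell h \in cA[x]$ implies $h \in cA[x]$. This follows from McCoy's theorem in $(A/c)[x]$. If $\ell$ were a zero divisor there, some $r \notin cA$ would satisfy $ra_i \in cA$ for all $i$. Then $(r/c)(a_1,\ldots,a_n)$ would give a nonzero torsion element of $\mathscr F$. With this, $A[x]/(\ell)$ is a domain and $D$ is integral. Over $\mathbf P^1$ there is a shortcut: torsion-free means locally free, so $D = \mathbf P(\mathscr F)$ is a $\mathbf P^{n-2}$-bundle.

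Finally, you assert that associated points of an effective Cartier divisor on an integral scheme are generic points of its components. That needs Serre's condition $S_2$ and fails in general; the divisor $D$ over the cone above is a counterexample. Your argument does not actually use it.
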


\begin{proof}
Effective Cartier divisors~$D$ with $\mathcal O_X(D) \simeq \pi^*\mathscr L^{-1} \otimes \mathcal O_{\mathbf P(\mathscr E)}(1)$ correspond to nonzero maps $\pi^*\mathscr L \to \mathcal O_{\mathbf P(\mathscr E)}(1)$. By the adjunction $\pi^* \dashv \pi_*$, these correspond to nonzero maps $\mathscr L \to \pi_*\mathcal O_{\mathbf P(\mathscr E)}(1) \simeq \mathscr E$, where the isomorphism is \cite[Prop.~2.1.15]{EGA3I}. Any such map is injective because~$S$ is integral, hence corresponds to a short exact sequence
\[
0 \to \mathscr L \to \mathscr E \to \mathscr F \to 0.
\]
The first statement therefore follows from \autoref{Lem ses}. For the second, recall that \makebox{$\Pic(X) \cong \pi^*\Pic(S) \oplus \mathbf Zh$} when $n \geq 2$. Thus, any expression $D = D_1 + D_2$ as a sum of effective divisors has $\mathcal O_X(D_i) \simeq \pi^*\mathscr L_i^{-1} \otimes \mathcal O_{\mathbf P(\mathscr E)}(d_i)$ for some $d_i \in \mathbf Z$ and $\mathscr L_i \in \Pic(S)$. Since the~$D_i$ are effective, we get $d_i \geq 0$, so one is~$0$ and the other is~$1$. We see that~$D$ is irreducible if and only if the map $\mathscr L \hookrightarrow \mathscr E$ does not factor via a strict inclusion $\mathscr L \hookrightarrow \mathscr M$ for some line bundle~$\mathscr M$, which means exactly that~$\mathscr F$ is torsion-free.
\end{proof}

The following result is probably known, but I am not aware of a reference.

\begin{Lemma}\label{Lem projection blowup}
Let~$S$ be a scheme, let $0 \to \mathscr E \to \mathscr F \to \mathscr G \to 0$ be a short exact sequence of vector bundles on~$S$, and write $\pi_\mathscr E \colon \mathbf P(\mathscr E) \to S$ for the projection, and likewise for~$\pi_\mathscr F$ and~$\pi_\mathscr G$. Let~$\mathscr H$ be the vector bundle on~$\mathbf P(\mathscr E)$ defined as the pushout $\pi_\mathscr E^*\mathscr F \oplus_{\pi_\mathscr E^*\mathscr E} \mathcal O_{\mathbf P(\mathscr E)}(1)$. Then the blowup $\widetilde X = \Bl_{\mathbf P(\mathscr G)}(\mathbf P(\mathscr F))$ of $X = \mathbf P(\mathscr F)$ along the closed subscheme~$\mathbf P(\mathscr G)$ is canonically isomorphic to the projective bundle $Y = \mathbf P_{\mathbf P(\mathscr E)}(\mathscr H)$ over~$\mathbf P(\mathscr E)$, taking the exceptional divisor in~$\widetilde X$ to the divisor $\mathbf P_{\mathbf P(\mathscr E)}(\pi_\mathscr E^*\mathscr G) \subseteq Y$.
\end{Lemma}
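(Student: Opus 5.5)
The plan is to build morphisms in both directions using the universal properties of projective bundles and of blowing up, and then to check that they are mutually inverse.

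\emph{Step 1: a morphism $Y \to \widetilde X$.} Write $\rho \colon Y \to \mathbf P(\mathscr E)$ and $\pi_Y = \pi_\mathscr E \circ \rho$. The tautological surjection $\pi_\mathscr E^*\mathscr E \twoheadrightarrow \mathcal O_{\mathbf P(\mathscr E)}(1)$ pushes out to a surjection $\pi_\mathscr E^*\mathscr F \twoheadrightarrow \mathscr H$. The pushout also gives a short exact sequence
\[
0 \to \mathcal O_{\mathbf P(\mathscr E)}(1) \to \mathscr H \to \pi_\mathscr E^*\mathscr G \to 0,
\]
so $\mathscr H$ is a vector bundle. The composite $\pi_Y^*\mathscr F \twoheadrightarrow \rho^*\mathscr H \twoheadrightarrow \mathcal O_Y(1)$ is surjective, and therefore defines $g \colon Y \to \mathbf P(\mathscr F)$ over~$S$ with $g^*\mathcal O_X(1) = \mathcal O_Y(1)$.

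By \autoref{Lem ses}, $g^{-1}(\mathbf P(\mathscr G))$ is cut out by the image of $\pi_Y^*\mathscr E \otimes \mathcal O_Y(-1) \to \mathcal O_Y$. This map factors as
\[
\pi_Y^*\mathscr E \otimes \mathcal O_Y(-1) \twoheadrightarrow \rho^*\mathcal O_{\mathbf P(\mathscr E)}(1) \otimes \mathcal O_Y(-1) \to \mathcal O_Y,
\]
where the second map comes from the displayed sequence for~$\mathscr H$. Applying \autoref{Lem ses} to that sequence, whose subsheaf $\mathcal O_{\mathbf P(\mathscr E)}(1)$ is invertible, shows that the second map is injective with image the ideal of $\mathbf P(\pi_\mathscr E^*\mathscr G) \subseteq Y$. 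Hence $g^{-1}(\mathbf P(\mathscr G)) = \mathbf P(\pi_\mathscr E^*\mathscr G)$ is an effective Cartier divisor, and the universal property of blowing up gives $\tilde g \colon Y \to \widetilde X$.

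\emph{Step 2: a morphism $\widetilde X \to Y$.} Let $\beta \colon \widetilde X \to X$ be the blowup with exceptional divisor~$F$. Again by \autoref{Lem ses}, the ideal of $\mathbf P(\mathscr G)$ is the image of $\pi^*\mathscr E(-1) \to \mathcal O_X$. Its extension to~$\widetilde X$ is $\mathcal O(-F)$, so we get a surjection $\beta^*\pi^*\mathscr E \twoheadrightarrow \beta^*\mathcal O_X(1)(-F)$, which defines $\sigma \colon \widetilde X \to \mathbf P(\mathscr E)$ with $\sigma^*\mathcal O_{\mathbf P(\mathscr E)}(1) = \beta^*\mathcal O_X(1)(-F)$.

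Now consider the two maps $\beta^*\pi^*\mathscr F \to \beta^*\mathcal O_X(1)$ (tautological) and $\beta^*\mathcal O_X(1)(-F) \hookrightarrow \beta^*\mathcal O_X(1)$ (inclusion). They agree on $\beta^*\pi^*\mathscr E$ by construction, so they induce a map $\sigma^*\mathscr H \to \beta^*\mathcal O_X(1)$, which is surjective because the first map already is. This quotient defines $\widetilde X \to Y$ lifting~$\sigma$.

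\emph{Step 3: the two maps are inverse, and the divisors match.} Both composites are morphisms over~$S$ into projective bundles, respectively into a blowup. For $Y \to \widetilde X \to Y$, I will check that the quotient line bundles and the maps to $\mathbf P(\mathscr E)$ are pulled back to the tautological ones. For $\widetilde X \to Y \to \widetilde X$, it suffices by the uniqueness in the universal property of the blowup to check that the composite to~$X$ is~$\beta$; this holds because the composite quotient of $\pi^*\mathscr F$ is the tautological one. The identification of the exceptional divisor with $\mathbf P(\pi_\mathscr E^*\mathscr G)$ was already obtained in Step 1.

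I expect the main obstacle to be Step 3 for $Y \to \widetilde X \to Y$. There one must verify that $\tilde g^*\mathcal O(-F)$ corresponds under~$g$ to $\rho^*\mathcal O_{\mathbf P(\mathscr E)}(1) \otimes \mathcal O_Y(-1)$, compatibly with the given maps. This should follow from the factorisation in Step 1, but the bookkeeping of the twists requires care.
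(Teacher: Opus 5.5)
Your proposal is correct and follows essentially the same route as the paper. You build $\widetilde X \to Y$ from the invertible ideal $\mathcal O(-F)$, which gives the map to $\mathbf P(\mathscr E)$, together with the pushout property of $\mathscr H$. You build $Y \to \widetilde X$ via \autoref{Lem ses} applied to $0 \to \mathcal O_{\mathbf P(\mathscr E)}(1) \to \mathscr H \to \pi_\mathscr E^*\mathscr G \to 0$ and the universal property of blowing up. The paper does exactly this, adding only a harmless preliminary reduction to constant ranks with $\mathscr E, \mathscr G \neq 0$. It leaves the inverse check as ``straightforward''; your Step 3 sketches that check in somewhat more detail.
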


When~$S$ is the spectrum of a field, this is \cite[Prop.~9.11]{EisenbudHarris} (note that \cite{EisenbudHarris} uses the dual convention for the projective bundle of a vector bundle), whose proof can be adapted to prove the result above for general~$S$ under the additional hypothesis that the short exact sequence splits. One could deduce the general result from this using local splitting and glueing. Instead, we only use universal properties of projective bundles and blowups. A similar strategy was used in \cite[Appendix]{vDdBPaulsen} to get a coordinate-free version of the theory of elementary transformations of vector bundles of Maruyama.

\begin{proof}
We will use repeatedly and without mention that a short exact sequence of vector bundles stays exact after pullback along an arbitrary morphism of schemes. Note that~$\mathscr H$ sits in a commutative diagram with exact rows
\begin{equation}
\begin{tikzcd}[row sep=1.2em,column sep=1.2em]
0 \ar{r} & \pi_\mathscr E^*\mathscr E \ar{r}\ar[two heads]{d} & \pi_\mathscr E^*\mathscr F \ar{r}\ar[two heads]{d} & \pi_\mathscr E^*\mathscr G \ar{r}\ar[equal]{d} & 0 \\
0 \ar{r} & \mathcal O_{\mathbf P(\mathscr E)}(1) \ar{r} & \mathscr H \ar{r} & \pi_\mathscr E^*\mathscr G \ar{r} & 0
\end{tikzcd}\label{Dia pushout}
\end{equation}
where the left square is a pushout, which in particular shows that it is a vector bundle. Replacing~$S$ by a clopen subscheme, we may assume that the ranks of~$\mathscr E$, $\mathscr F$, and~$\mathscr G$ are constant. If~$\mathscr E$ is zero, then~$\mathbf P(\mathscr E)$ is empty and $\mathbf P(\mathscr F) = \mathbf P(\mathscr G)$, so the blowup $\Bl_{\mathbf P(\mathscr G)}(\mathbf P(\mathscr F))$ is empty as well. If $\mathscr G = 0$, then~$\mathbf P(\mathscr G)$ is empty and~$\mathscr H$ is a line bundle, so both sides are $\mathbf P(\mathscr E) = \mathbf P(\mathscr G)$. Thus, we may assume that~$\mathscr E$ and~$\mathscr G$ are both nonzero.

We have a tautological quotient $\pi_\mathscr F^*\mathscr F \twoheadrightarrow \mathcal O_{\mathbf P(\mathscr F)}(1)$, and the image of the composition $\pi_\mathscr F^*\mathscr E \otimes \mathcal O_{\mathbf P(\mathscr F)}(-1) \to \pi_\mathscr F^*\mathscr F \otimes \mathcal O_{\mathbf P(\mathscr F)}(-1) \twoheadrightarrow \mathcal O_{\mathbf P(\mathscr F)}$ is the ideal sheaf cutting out~$\mathbf P(\mathscr G)$ by \autoref{Lem ses}. Writing $\pi \colon \widetilde X \to X$ for the blowup and $f \colon \widetilde X \to S$ for the structure map, we see that the image of the map $f^*\mathscr E \otimes \pi^*\mathcal O_{\mathbf P(\mathscr F)}(-1) \to \mathcal O_{\widetilde X}$ is an invertible sheaf of ideals, which we will denote by~$\mathcal I$. This gives a surjection $f^*\mathscr E \twoheadrightarrow \mathcal I \otimes \pi^*\mathcal O_{\mathbf P(\mathscr F)}(1)$, defining a map $\phi \colon \widetilde X \to \mathbf P(\mathscr E)$ over~$S$ with $\phi^*\mathcal O_{\mathbf P(\mathscr E)}(1) \cong \mathcal I \otimes \pi^*\mathcal O_{\mathbf P(\mathscr F)}(1)$. Now the map $f^*\mathscr F \twoheadrightarrow \mathcal \pi^*\mathcal O_{\mathbf P(\mathscr F)}(1)$ factors as $f^*\mathscr F \twoheadrightarrow \phi^*\mathscr H \twoheadrightarrow \pi^*\mathcal O_{\mathbf P(\mathscr F)}(1)$ since its restriction to~$f^*\mathscr E$ lands in $\phi^*\mathcal O_{\mathbf P(\mathscr E)}(1) \subseteq \pi^*\mathcal O_{\mathbf P(\mathscr F)}(1)$. The surjection $\phi^*\mathscr H \twoheadrightarrow \pi^*\mathcal O_{\mathbf P(\mathscr F)}(1)$ defines a morphism $\widetilde\phi \colon \widetilde X \to Y$ over $\mathbf P(\mathscr E)$ such that $\widetilde\phi^*\mathcal O_{\mathbf P(\mathscr H)}(1) \cong \pi^*\mathcal O_{\mathbf P(\mathscr F)}(1)$.

Conversely, write $\pi_\mathscr H \colon Y \to \mathbf P(\mathscr E)$ and $g \colon Y \to S$ for the structure maps. We get a tautological quotient $\pi_\mathscr H^*\mathscr H \twoheadrightarrow \mathcal O_{\mathbf P(\mathscr H)}(1)$, whose composition with the surjective map $g^*\mathscr F \twoheadrightarrow \pi_\mathscr H^*\mathscr H$ defines a morphism $\psi \colon Y \to X$ with $\psi^*\mathcal O_{\mathbf P(\mathscr F)}(1) \cong \mathcal O_{\mathbf P(\mathscr H)}(1)$. Moreover, the map $\pi_{\mathscr H}^*\mathcal O_{\mathbf P(\mathscr E)}(1) \otimes \mathcal O_{\mathbf P(\mathscr H)}(-1) \to \mathcal O_{\mathbf P(\mathscr H)}$ is injective by \autoref{Lem ses} applied to the bottom sequence of \eqref{Dia pushout}. Thus, the image of the composite map
\[
g^*\mathscr E \otimes \mathcal O_{\mathbf P(\mathscr H)}(-1) \hookrightarrow g^*\mathscr F \otimes \mathcal O_{\mathbf P(\mathscr H)}(-1) \twoheadrightarrow \pi_\mathscr H^*\mathscr H \otimes \mathcal O_{\mathbf P(\mathscr H)}(-1) \twoheadrightarrow \mathcal O_{\mathbf P(\mathscr H)}
\]
is the invertible sheaf $\pi_\mathscr H^*\mathcal O_{\mathbf P(\mathscr E)}(1) \otimes \mathcal O_{\mathbf P(\mathscr H)}(-1)$. The universal property of blowing up shows that $\psi \colon Y \to X$ lifts uniquely to a map $\widetilde\psi \colon Y \to \widetilde X$ such that the image of \makebox{$\widetilde\psi^*\mathcal I \to \mathcal O_Y$} is the ideal $\pi_\mathscr H^*\mathcal O_{\mathbf P(\mathscr E)}(1) \otimes \mathcal O_{\mathbf P(\mathscr H)}(-1)$ (which cuts out the divisor $\mathbf P(\pi_\mathscr E^*\mathscr G) \subseteq Y$ by \autoref{Lem ses}). Using the universal properties, it is straightforward to see that~$\widetilde\phi$ and~$\widetilde\psi$ are inverses.
\end{proof}

\begin{Cor}\label{Cor projection torsor}
Let~$S$ be a scheme, let $0 \to \mathscr E \to \mathscr F \to \mathscr G \to 0$ be a short exact sequence of vector bundles on~$S$, and write $\pi_\mathscr E \colon \mathbf P(\mathscr E) \to S$ for the projection, and likewise for~$\pi_\mathscr F$ and~$\pi_\mathscr G$. Then the linear projection $\mathbf P(\mathscr F) \setminus \mathbf P(\mathscr G) \to \mathbf P(\mathscr E)$ is canonically an $\mathbf A(\pi_\mathscr E^*\mathscr G \otimes \mathcal O_{\mathbf P(\mathscr E)}(-1))$-torsor.
\end{Cor}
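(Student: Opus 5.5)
The plan is to combine \autoref{Lem projection blowup} with \autoref{Lem torsor}, applied over the base $\mathbf P(\mathscr E)$ instead of~$S$.

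First, by \autoref{Lem projection blowup}, the blowup $\widetilde X = \Bl_{\mathbf P(\mathscr G)}(\mathbf P(\mathscr F))$ is identified with $Y = \mathbf P_{\mathbf P(\mathscr E)}(\mathscr H)$. Under this identification the exceptional divisor corresponds to $\mathbf P_{\mathbf P(\mathscr E)}(\pi_\mathscr E^*\mathscr G) \subseteq Y$. Since the blowup is an isomorphism away from the centre, we get an isomorphism
\[
\mathbf P(\mathscr F) \setminus \mathbf P(\mathscr G) \cong \widetilde X \setminus E \cong \mathbf P_{\mathbf P(\mathscr E)}(\mathscr H) \setminus \mathbf P_{\mathbf P(\mathscr E)}(\pi_\mathscr E^*\mathscr G).
\]
To see that this is compatible with the linear projection, I will check that the morphism $\phi \colon \widetilde X \to \mathbf P(\mathscr E)$ built in the proof of \autoref{Lem projection blowup} restricts, on the complement of~$E$, to the linear projection. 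There $\mathcal I = \mathcal O$, and $\phi$ is given by the quotient $f^*\mathscr E \to f^*\mathscr F \twoheadrightarrow \mathcal O(1)$. This quotient is surjective exactly off $\mathbf P(\mathscr G)$ by \autoref{Lem ses}, which is precisely the description of the linear projection.

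Next, I apply \autoref{Lem torsor} over the base $\mathbf P(\mathscr E)$ to the bottom row of \eqref{Dia pushout}:
\[
0 \to \mathcal O_{\mathbf P(\mathscr E)}(1) \to \mathscr H \to \pi_\mathscr E^*\mathscr G \to 0 .
\]
This is a short exact sequence of vector bundles whose subbundle has rank~$1$, so the hypotheses of the lemma hold. The lemma then says that $\mathbf P(\mathscr H) \setminus \mathbf P(\pi_\mathscr E^*\mathscr G)$ is a torsor under
\[
\mathbf A\bigl(\mathcal O_{\mathbf P(\mathscr E)}(-1) \otimes \pi_\mathscr E^*\mathscr G\bigr),
\]
which is exactly the group scheme in the statement.

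The degenerate cases $\mathscr E = 0$ or $\mathscr G = 0$ are trivial. If $\mathscr E = 0$, then $\mathbf P(\mathscr F) = \mathbf P(\mathscr G)$, so both sides are empty. If $\mathscr G = 0$, the group scheme is trivial and the projection is an isomorphism.

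The main obstacle is the canonicity and compatibility claim: the torsor structure must not depend on the identification with the blowup. I would address this directly with the functor-of-points description from \autoref{Lem torsor}. A $T$-point of $\mathbf P(\mathscr F) \setminus \mathbf P(\mathscr G)$ lying over a $T$-point $\mathscr E_T \twoheadrightarrow \mathscr L$ of $\mathbf P(\mathscr E)$ is a quotient $\mathscr F_T \twoheadrightarrow \mathscr L$ extending the given one. Two such extensions differ by a map $\mathscr G_T \to \mathscr L$, that is, by a section of $\mathscr G^\vee_T \otimes \mathscr L$. Extensions exist locally on~$T$ because the sequence splits locally, so this exhibits the torsor canonically.

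I will also reconcile the dualisation conventions for~$\mathbf A(\cdot)$. With the paper's convention, $\mathbf A(\mathscr G \otimes \mathcal O(-1))$ has $T$-points $\Hom(\mathscr G_T \otimes \mathscr L^{-1}, \mathcal O_T) = \Hom(\mathscr G_T, \mathscr L)$, which matches the difference of two extensions. This functorial check makes the blowup step optional, but consistent with it.
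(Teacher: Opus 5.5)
Your proposal is correct and follows the paper's proof, which just says the corollary is immediate from \autoref{Lem projection blowup} and \autoref{Lem torsor}, the latter applied over $\mathbf P(\mathscr E)$ to the bottom row of \eqref{Dia pushout}; your check that $\phi$ restricts to the linear projection away from the exceptional divisor fills in a detail the paper leaves implicit. Your closing functor-of-points argument is also valid on its own and bypasses the blowup lemma entirely: it is the proof of \autoref{Lem torsor} carried out relative to $\mathbf P(\mathscr E)$.
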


\begin{proof}
This is immediate from \autoref{Lem projection blowup} and \autoref{Lem torsor}.
\end{proof}

\section{The criterion}\label{Sec criterion}
Let~$\mathscr E$ be a vector bundle of rank~$n \geq 2$ on~$\mathbf P^1$. Then $X = \mathbf P(\mathscr E)$ is a smooth projective variety of dimension~$n$ with a projection $\pi \colon X \to \mathbf P^1$, and $\Pic(X) \cong \mathbf Zf \oplus \mathbf Zh$ where $f = \pi^*(\mathcal O_{\mathbf P^1}(1))$ is a fibre and $h = \mathcal O_{\mathbf P(\mathscr E)}(1)$.

Let $Z \subseteq X$ be a closed subscheme with complement~$U$. We will study necessary and sufficient conditions for~$U$ to be isomorphic to~$\mathbf A^n$. Firstly,~$Z$ is a divisor by \cite[Tag~\href{https://stacks.math.columbia.edu/tag/0BCW}{0BCW}]{Stacks}. If~$Z_1,\ldots,Z_s$ are the irreducible components of~$Z$, the exact sequence
\begin{align*}
\Gamma(X,\mathcal O_X^\times) \to \Gamma(U,\mathcal O_U^\times) \to \mathbf Z^s &\to \Pic(X) \to \Pic(U) \to 0 \\
e_i &\mapsto Z_i
\end{align*}
shows that $Z_1, \ldots, Z_s$ form a basis of~$\Pic(X)$, so in particular $s = 2$. Thus, it remains to study $X \setminus (D \cup E)$ for reduced and irreducible divisors~$D$ and~$E$.

When~$D$ and~$E$ are of the form~$df + h$ and~$ef + h$ for $d, e \in \mathbf Z$, we give a complete characterisation for when the complement $U = X \setminus (D \cup E)$ is isomorphic to~$\mathbf A^n$. By \autoref{Lem divisor P(E)}, there are short exact sequences
\begin{alignat*}{5}
0 & \to &\ \mathscr L & \to &\ \mathscr E & \to &\ \mathscr F & \to &\ 0,\\
0 & \to &\ \mathscr M & \to &\ \mathscr E & \to &\ \mathscr G & \to &\ 0\makebox*{,}{}
\end{alignat*}
of vector bundles on~$\mathbf P^1$ such that $D = \mathbf P(\mathscr F)$ and $E = \mathbf P(\mathscr G)$. Then $D \cap E$ is given by~$\mathbf P(\mathscr H)$, where~$\mathscr H$ is the cokernel of $\mathscr L \oplus \mathscr M \to \mathscr E$. Because the short exact sequence $0 \to \mathscr H\tors \to \mathscr H \to \mathscr H\tf \to 0$ splits, this means that $D \cap E$ is the union of the projective bundle $\mathbf P(\mathscr H\tf)$ and the scheme $\mathbf P(\mathscr H\tors)$. When $D \neq E$, the fibres~$D_p$ and~$E_p$ differ for general $p \in \mathbf P^1$. Then~$\mathscr H\tf$ has rank~$n-2$ (in particular, $\mathbf P(\mathscr H\tf) = \varnothing$ when $n = 2$), and~$\mathbf P(\mathscr H\tors)$ is the restriction $D|_W = E|_W \to W$ over the scheme-theoretic locus $W \subseteq \mathbf P^1$ of~$R$-points $p \colon \Spec R \to \mathbf P^1$ where $p^*D = p^*E$. Let~$r$ be the number of points in $\operatorname{Supp}(\mathscr H\tors)$; in other words, the number of points $p \in \mathbf P^1$ such that the fibres~$D_p$ and~$E_p$ agree.

\begin{Thm}\label{Thm criterion}
Let $X = \mathbf P(\mathscr E) \to \mathbf P^1$ be as above, let~$D$ and~$E$ be irreducible divisors with $D \sim df + h$ and $E \sim ef + h$, and let $U = X \setminus (D \cup E)$. Then $U \cong \mathbf A^n$ if and only if $e = d \pm 1$ and~$r=1$.
\end{Thm}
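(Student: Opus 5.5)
Necessity and sufficiency are handled separately. For necessity, suppose $U \cong \mathbf A^n$. First, the map $\mathbf Z^2 \to \Pic(X)$, $e_1 \mapsto D$, $e_2 \mapsto E$, from the exact sequence above must be an isomorphism. In the basis $f,h$ its determinant is $\det\begin{pmatrix} d & e \\ 1 & 1\end{pmatrix} = d-e$, so we need $e = d \pm 1$. Next we count points, or more precisely compute classes in the Grothendieck ring $K_0(\mathrm{Var})$ (or $\ell$-adic Euler characteristics / point counts over finite fields after spreading out; the class computation is cleaner). We have $[X] = (\mathbf L+1)[\mathbf P^{n-1}]$ and $[D] = [E] = (\mathbf L+1)[\mathbf P^{n-2}]$, since $D,E$ are projective bundles $\mathbf P(\mathscr F),\mathbf P(\mathscr G)$ with $\mathscr F,\mathscr G$ torsion-free, hence locally free, by \autoref{Lem divisor P(E)}. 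By the discussion before the theorem, $D \cap E = \mathbf P(\mathscr H\tf) \sqcup \bigl(\mathbf P(\mathscr H\tors)\setminus \mathbf P(\mathscr H\tf)\bigr)$. Over each of the $r$ points $p$ with $D_p=E_p$, the fibre of $D\cap E$ is a $\mathbf P^{n-2}$ rather than a $\mathbf P^{n-3}$, so $[D\cap E]_{\mathrm{red}} = (\mathbf L+1)[\mathbf P^{n-3}] + r\,\mathbf L^{n-2}$, with degree-$1$ residue fields assumed; over a general field one counts with $[\kappa(p)]$, and the argument is to be adapted via the geometric count. Then
\[
[U] = (\mathbf L+1)\bigl([\mathbf P^{n-1}] - 2[\mathbf P^{n-2}] + [\mathbf P^{n-3}]\bigr) - r\mathbf L^{n-2} = (\mathbf L+1)\mathbf L^{n-2} - r\mathbf L^{n-2}.
\]
This equals $\mathbf L^n$... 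I would compute more carefully, but the intended outcome is $[U] = \mathbf L^{n-1} + (1-r)\mathbf L^{n-2}$. Specialising to point counts over $\mathbf F_q$, or to compactly supported Euler characteristic / $H^{2n-2}_c$, and comparing with $\mathbf A^n$ (which, after the correct bookkeeping, should force the $\mathbf L^{n-2}$ coefficient, i.e.\ $H^{2n-4}_c$ or equivalently $H^2(U)$ after duality, to vanish) gives $r=1$. An alternative cleaner route for $r$: $H^2(U)$ or $H^3$ via the Gysin sequence with $D\cup E$, where $r$ appears as the number of extra components of top-dimensional cohomology of $D\cap E$.

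For sufficiency, I reduce to rank $2$ using \autoref{Cor projection torsor}. Pick a rank-$(n-2)$ subbundle $\mathscr K\subseteq \mathscr E$ contained in $\mathscr L\oplus\mathscr M$'s complement, i.e.\ lifting $\mathscr H\tf$: choose the kernel $\mathscr K$ of $\mathscr E \to \mathscr H\tf$ composed appropriately, so that $\mathbf P(\mathscr H\tf)\subseteq D\cap E$ is the centre. Projecting from $\mathbf P(\mathscr H\tf)$ gives $X\setminus \mathbf P(\mathscr H\tf)\to \mathbf P(\mathscr E')$, with $\mathscr E'$ of rank $2$, an affine-space torsor by \autoref{Cor projection torsor}. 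Here $D,E$ are the preimages of sections $D',E'$ of the ruled surface, so $U$ is a torsor under a vector bundle over $U' = \mathbf P(\mathscr E')\setminus(D'\cup E')$. The numerical conditions are preserved. Once $U'\cong\mathbf A^2$, the torsor is trivial and the bundle is trivial since $U'$ is affine with trivial Quillen–Suslin vector bundles, giving $U\cong\mathbf A^n$.

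The main obstacle is the surface case: a Hirzebruch surface with two sections $D',E'$ whose classes differ by $f$, meeting in exactly one point $p$. Since $(D'\cdot E') = 1$ in that situation, they meet transversally at $p$. I would do elementary transformations: blow up $p$, contract the strict transform of the fibre $F_p$ (a $(-1)$-curve), and track $D',E'$. This makes them disjoint sections in a new Hirzebruch surface, with the image of $F_p$ removed as well. Alternatively, blow up $p$ and contract $\widetilde F_p$ and then the strict transform of the section of negative self-intersection, arriving at $\mathbf P^2$ with the boundary becoming a line, so $U'\cong\mathbf P^2\setminus\text{line}=\mathbf A^2$. Concretely, I would verify that $D'+E'$ plus the exceptional curves form a chain, and choose contractions so that everything except a single line is contracted outside $U'$. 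Getting the self-intersection bookkeeping right, via $E'^2$ and $D'^2$ differing by $2$, is where care is needed.
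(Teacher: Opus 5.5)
Your necessity argument is the paper's: the determinant $d-e$ in $\Pic(X)$, then inclusion--exclusion in the Grothendieck ring. Your reduction to rank $2$ by projecting from $\mathbf P(\mathscr H_{\operatorname{tf}})$ is also the paper's, although the kernel of $\mathscr E\to\mathscr H_{\operatorname{tf}}$ is the rank-$2$ bundle $\mathscr E'$, not a rank-$(n-2)$ one.

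The class computation as written does not close. Inclusion--exclusion \emph{adds} $[D\cap E]$, so the $r\mathbf L^{n-2}$ term enters with a plus sign. Also $[\mathbf P^{n-1}]-2[\mathbf P^{n-2}]+[\mathbf P^{n-3}]=\mathbf L^{n-1}-\mathbf L^{n-2}$, not $\mathbf L^{n-2}$. The correct result is therefore $[U]=\mathbf L^n+(r-1)\mathbf L^{n-2}$. Your ``intended outcome'' $\mathbf L^{n-1}+(1-r)\mathbf L^{n-2}$ has the wrong leading term and would contradict $[U]=\mathbf L^n$ for every $r$. With the corrected formula, the $\ell$-adic Euler characteristic (which sends $\mathbf L\mapsto 1$) gives $r=1$.

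The genuine gap is in the surface case, which is the heart of sufficiency, and it has two parts.

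\textbf{The intersection need not be transversal.} The claim $D'\cdot E'=1$ is false. With $e=d+1$ one has $D'\cdot E'=E'^2-1$, which is the length of the torsion sheaf $\mathscr E'/(\mathscr L\oplus\mathscr M)$. The condition $r=1$ only says that this sheaf is supported at one point. For instance, in $\mathbf P^1\times\mathbf P^1$ the graphs of $t\mapsto t$ and $t\mapsto t+t^{-1}$ have $d=1$ and $e=2$, and they meet only over $t=\infty$, with multiplicity $3$. So a single blowup does not separate $D'$ and $E'$.

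\textbf{Contracting $\widetilde F_p$ changes the open set.} The curve $\widetilde F_p$ is not part of the boundary: $F_p\cap(D'\cup E')=\{p\}$, so $F_p\setminus\{p\}\subseteq U'$. Contracting it computes $U'\setminus F_p$ rather than $U'$.
\begin{itemize}
\item Even when $D'\cdot E'=1$, your elementary transformation produces two disjoint sections plus a removed fibre, i.e.\ $U'\setminus F_p\cong\mathbf A^1\times\mathbf G_m$.
\item Your route to $\mathbf P^2$ (contract $\widetilde F_p$, then $\widetilde D'$) leaves \emph{two} boundary lines, the images of $E_1$ and $\widetilde E'$.
\end{itemize}
Neither tells you how $F_p\cap U'\cong\mathbf A^1$ is glued back in. Every curve you contract must lie in the total transform of $D'\cup E'$.

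\textbf{The missing idea is the paper's pencil.}
\begin{itemize}
\item Since $F_p+D'\sim E'$, the divisors $F_p+D'$ and $E'$ span a pencil whose only base point is $p$.
\item Set $m=E'^2$. Blow up $m-1$ times at the successive intersection points of the strict transforms of $D'$ and $E'$, each lying on the newest exceptional curve. Then blow up once more at $\widetilde E'\cap E_{m-1}$.
\item This resolves the pencil to a morphism $X_m\to\mathbf P^1$ whose fibre over $[0:1]$ is $\widetilde F_p+\widetilde D'+\widetilde E_1+\cdots+\widetilde E_{m-1}$.
\item Here $\widetilde D'$ is a $(-1)$-curve and $\widetilde E_{m-1},\ldots,\widetilde E_1$ is a chain of $(-2)$-curves, all lying in the boundary. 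They can be contracted one after another, while $\widetilde F_p$ is \emph{not} contracted and becomes a fibre.
\item The result is a $\mathbf P^1$-bundle over $\mathbf P^1$ in which the boundary becomes the fibre $\widetilde E'$ plus the section $E_m$. Its complement, which is still $U'$, is an $\mathbf A^1$-bundle over $\mathbf A^1$, i.e.\ $\mathbf A^2$.
\end{itemize}
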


\begin{proof}
Firstly, assume $U \cong \mathbf A^n$. Then~$D$ and~$E$ form a basis of $\Pic(X) \cong \mathbf Zf \oplus \mathbf Zh$, so $e = d \pm 1$. Note that~$\mathbf P(\mathscr H)$ is the union of~$\mathbf P(\mathscr H\tf)$ and $\mathbf P(\mathscr H\tors) = \mathbf P(\mathscr F)|_W$, which intersect in~$\mathbf P(\mathscr H\tf)|_W$. Thus, in the Grothendieck ring of varieties, we have
\begin{align*}
[\mathbf P(\mathscr H)] &= [\mathscr P(\mathscr H\tf)] + [\mathscr P(\mathscr F)|_W] - [\mathscr P(\mathscr H\tf)|_W] \\
&= (1 + \mathbf L)(1 + \ldots + \mathbf L^{n-3}) + r(1 + \ldots + \mathbf L^{n-2}) - r(1 + \ldots + \mathbf L^{n-3}) \\
&= (1 + \mathbf L)(1 + \ldots + \mathbf L^{n-3}) + r\mathbf L^{n-2}.
\end{align*}
Thus, we compute
\begin{align*}
[U] =&\ [X]-[D]-[E]+[D \cap E] \\
=&\ (1\!+\!\mathbf L)(1\!+\!\ldots\!+\!\mathbf L^{n-1}) - 2(1\!+\!\mathbf L)(1\!+\!\ldots\!+\!\mathbf L^{n-2}) + (1\!+\!\mathbf L)(1\!+\!\ldots\!+\!\mathbf L^{n-3}) + r\mathbf L^{n-2} \\
=&\ (1+\mathbf L)(\mathbf L^{n-1}-\mathbf L^{n-2}) + r\mathbf L^{n-2} = \mathbf L^n + (r-1)\mathbf L^{n-2}.
\end{align*}
Since $[U] = \mathbf L^n$, we conclude that $r = 1$.

Conversely, suppose that $e = d \pm 1$ and $r = 1$. Without loss of generality, we may assume $e = d+1$. Let~$\mathscr E'$ be the kernel of the composite surjection $\mathscr E \twoheadrightarrow \mathscr H \twoheadrightarrow \mathscr H\tf$, and let $X' = \mathbf P(\mathscr E')$. By \autoref{Cor projection torsor}, linear projection away from $\mathbf P(\mathscr H\tf) \subseteq \mathbf P(\mathscr E)$ defines an $\mathbf A(\mathscr K)$-torsor $\phi \colon X \setminus \mathbf P(\mathscr H\tf) \to X'$, where $\mathscr K = \pi_{\mathscr E'}^*\mathscr H\tf \otimes \mathcal O_{\mathbf P(\mathscr E')}(-1)$.

The surjection $\mathscr H \twoheadrightarrow \mathscr H\tf$ gives an injection $\mathscr L \oplus \mathscr M \hookrightarrow \mathscr E'$. By \autoref{Lem divisor P(E)}, the inclusions $\mathscr L \hookrightarrow \mathscr E'$ and $\mathscr M \hookrightarrow \mathscr E'$ define divisors $D', E' \subseteq \mathbf P(\mathscr E')$ whose pullbacks along~$\phi$ are the restrictions of~$D$ and~$E$ to $X \setminus \mathbf P(\mathscr H\tf)$. Note that both maps $X \hookleftarrow X \setminus \mathbf P(\mathscr H\tf) \twoheadrightarrow X'$ induce isomorphisms on~$\Pic$, since~$\mathbf P(\mathscr H\tf)$ has codimension~$2$ and~$\phi$ is an $\mathbf A^{n-2}$-bundle. Since $\phi^*\mathcal O_{\mathbf P(\mathscr E')}(1) \cong \mathcal O_{\mathbf P(\mathscr E)}(1)|_{X \setminus \mathbf P(\mathscr H\tf)}$, we get $D' \sim df'+h'$ and $E' \sim ef'+h'$ under the analogous isomorphism $\Pic(X') \cong \mathbf Zf' \oplus \mathbf Zh'$.

Suppose we know that $U' \cong \mathbf A^2$. Since~$\mathscr H\tf$ is a sum of line bundles and $\Pic(\mathbf A^2) = 0$, we see that~$\mathscr K|_{U'}$ is trivial. Moreover, the $\mathscr K|_{U'}$-torsor $U \to U'$ is trivial since $U' = \mathbf A^2$ is affine. Thus, we conclude that $U \cong \mathbf A^{n-2} \times U' \cong \mathbf A^n$ (this can also be deduced from \cite[Thm.~4.4]{BassConnellWright}, although the more precise computation above is probably an easier argument overall). Replacing~$\mathscr E$ by~$\mathscr E'$, we have therefore reduced to the case $n = 2$, and we will drop all the primes from the notation.

By assumption, there is a unique $p \in \mathbf P^1$ such that the points~$D_p$ and~$E_p$ in $\pi^{-1}(p) \cong \mathbf P^1$ agree. Let $C \subseteq X$ be the fibre above~$p$, and let~$q \in C$ be the intersection point $D \cap E$. Note that $C+D \sim (d+1)f + h \sim E$, so the pencil $a(C+D) + bE$ defines a rational map $\psi \colon X \dashrightarrow \mathbf P^1$ such that $\psi^{-1}([0:1]) = C+D$ and $\psi^{-1}([1:0]) = E$. Set $m = E^2$, and note that $m \geq 2$ since $m-1 = D \cdot E \geq 1$. If $X_1 \to X$ is the blowup at~$q$, then the strict transforms~$\widetilde D$ and~$\widetilde E$ intersect with multiplicity~$m-2$, and all intersection points lie on the exceptional divisor~$E_1$. Since~$D$ and~$E$ are smooth, their strict transforms meet~$E_1$ once, so~$\widetilde D$ and~$\widetilde E$ intersect once with multiplicity~$m-2$. The rational map $\psi_1 \colon X_1 \dashrightarrow \mathbf P^1$ corresponds to the pencil $a(\widetilde C + \widetilde D + E_1)+b\widetilde E$, so~$E_1$ maps to~$[0:1]$.  Repeating this process $i < m-1$ times gives strict transforms~$\widetilde D$ and~$\widetilde E$ intersecting once with multiplicity~$m-1-i$, and after~$m-1$ steps they no longer meet. For $i < m$, the fibre of~$\psi_i$ above~$[0:1]$ is $\widetilde C + \widetilde D + \widetilde E_1 + \ldots + \widetilde E_{i-1} + E_i$. After~$m-1$ steps, the strict transform~$\widetilde E$ meets the last exceptional~$E_{m-1}$ with multiplicity~$1$ but does not meet the other components of $\psi_{m-1}^{-1}([0:1])$, and blowing up this point resolves the rational map~$\psi$ to a morphism $\psi_m \colon X_m \to \mathbf P^1$ with $\psi_m^{-1}([0:1]) = \widetilde C + \widetilde D + \widetilde E_1 + \ldots + \widetilde E_{m-1}$. The dual graph of the preimage of $D \cup E$ is given by
\[
\begin{tikzcd}[row sep=.3em,column sep=.25em]
 & & & \widetilde D \ar[-,start anchor=center,end anchor=center,shorten <=10pt,shorten >=18pt]{rd} \\
 & & & & \widetilde E_{m-1} \ar[-]{rrr} & & & \cdots \ar[-]{rrr} & & & \widetilde E_1\punct{,} \\
\widetilde E \ar[-]{rrr} & & & E_m \ar[-,start anchor=center,end anchor=center,shorten <=10pt,shorten >=18pt]{ru}
\end{tikzcd}
\]
with self-intersections
\[
\begin{tikzcd}[row sep=.3em,column sep=.25em]
 & & & -1 \ar[-,start anchor=center,end anchor=center,shorten <=10pt,shorten >=10pt]{rd} \\
 & & & & -2 \ar[-]{rrr} & & & \cdots \ar[-]{rrr} & & & -2\punct{.} \\
0 \ar[-]{rrr} & & & -1 \ar[-,start anchor=center,end anchor=center,shorten <=10pt,shorten >=10pt]{ru}
\end{tikzcd}
\]
We may thus successively blow down~$\widetilde D$, $\widetilde E_{m-1}$, $\ldots$, $\widetilde E_1$, leaving us with a smooth surface~$X'$. Since these curves are also contracted by~$\psi_m$, we get a map $\psi' \colon X' \to \mathbf P^1$, all of whose fibres are smooth rational curves. We conclude that~$\psi'$ is a $\mathbf P^1$-bundle over~$\mathbf P^1$. Since the image of~$\widetilde E$ (resp.~$E_m$) is a fibre (resp.\ section) of~$\psi'$, we conclude that their complement is an $\mathbf A^1$-bundle over~$\mathbf A^1$. This proves that $U \cong \mathbf A^2$.
\end{proof}

\appendix
\titleformat{\section}[block]{\Large\bfseries\scshape\filcenter}{Appendix \thesection.}{1ex}{}
\section{A coordinate-free counterexample to the Jacobian conjecture}In this appendix, we give a coordinate-free exposition of the first counterexample to the Jacobian conjecture in characteristic~$0$ \cite{AlpogeX}, \cite{JiangX}. This appendix is a version of the blog post \cite{vDdBBlog2} and the expository paper \cite{vDdBExpo} by the author. We include it here because it does not yet seem to appear in the published literature.

\begin{Def}\label{Def}
Let $\varphi \colon \mathbf P^1 \times \Sym^2(\mathbf P^1) \to \Sym^3(\mathbf P^1)$ be the map $(p,\{q,r\}) \mapsto \{p,q,r\}$. Let $R \subseteq \mathbf P^1 \times \Sym^2(\mathbf P^1)$ be the ramification divisor, and let $H \subseteq \Sym^3(\mathbf P^1)$ be a hyperplane. Let $U = (\mathbf P^1 \times \Sym^2(\mathbf P^1)) \setminus (R \cup \varphi^{-1}(H))$, and let $V = \Sym^3(\mathbf P^1) \setminus H$.
\end{Def}

\begin{Thm}[\cite{AlpogeX,JiangX}]\label{Thm Jacobian}
If~$H$ is tangent to order~$2$ to the small diagonal, then the restriction $\varphi|_U \colon U \to V$ is an \'etale map between affine spaces of dimension~$3$ that is not an isomorphism.
\end{Thm}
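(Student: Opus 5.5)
Identify $\Sym^2(\mathbf P^1) \cong \mathbf P^2$ and $\Sym^3(\mathbf P^1) \cong \mathbf P^3$ as projective spaces of binary forms of degree~$2$ and~$3$; $\varphi$ is multiplication of forms. Then $X = \mathbf P^1 \times \mathbf P^2 = \mathbf P(\mathcal O^3)$ over the first factor, and I will apply \autoref{Thm criterion} with $D = \varphi^{-1}(H)$ and $E = R$. The statement has three parts: $V \cong \mathbf A^3$, which is clear since $H$ is a hyperplane; $\varphi|_U$ is \'etale and not an isomorphism; and $U \cong \mathbf A^3$, which is where the criterion enters.

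\textbf{Divisor classes.} The form $\varphi^*\mathcal O(1)$ is bilinear in the linear form $\ell$ and the quadratic form $Q$, so $D \sim f + h$. For $R$, I compute $K_X = -2f - 3h$ and $\varphi^*K_{\mathbf P^3} = -4(f+h)$, so $R = K_X - \varphi^*K_{\mathbf P^3} \sim 2f + h$. Concretely, $R$ is the locus where $\ell$ divides $Q$. Thus $e = d+1$.

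\textbf{Irreducibility and the fibres.} Over $p = [\ell] \in \mathbf P^1$, the fibre $R_p$ is the line $\{\ell\cdot m\}$ in $\mathbf P^2$, and $D_p$ is the line $\{Q : \ell Q \in H\}$. If $H$ corresponds to a linear functional $\lambda$ on cubics, then $D_p = R_p$ exactly when $\lambda(\ell^2 m) = 0$ for all linear $m$, i.e.\ when $H$ contains the tangent line to the twisted cubic (the small diagonal $\{\ell^3\}$) at $\ell^3$. For irreducibility of $D$ and $E$, I use \autoref{Lem divisor P(E)}: each fibre is a line for every $p$ unless $D$ contains a whole fibre $f$, and a reducible member of $|f+h|$ or $|2f+h|$ would have to be of the form $f + (\text{member of } |h|)$ etc. Only finitely many fibres coincide, so a torsion cokernel of the relevant map is ruled out directly. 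Concretely, $R$ is the image of the smooth $\mathbf P^1\times\mathbf P^1$, hence irreducible. For $D$, if $D$ contained $\{p\}\times\mathbf P^2$, then $H \supseteq \ell\cdot(\text{all quadrics})$, which is impossible since that span is the whole space.

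\textbf{Counting $r$.} The function $\ell \mapsto \lambda(\ell^2\,\cdot)$ is a section of a rank-$2$ bundle $\mathcal O(2)^{\oplus 2}$ on $\mathbf P^1$. Its vanishing locus $W$ is exactly the set of points where $H$ contains the tangent line to the twisted cubic. This is the main obstacle: showing that $r = 1$ precisely when $H$ is tangent to order~$2$ but not osculating. I would use the standard geometry of the twisted cubic: a plane contains a tangent line at $\ell^3$ iff $\lambda(\ell^3)=\lambda(\ell^2 m)=0$ for all $m$, i.e.\ $\lambda$ vanishes to order $\geq 2$ at $\ell$ on the curve. Since $\lambda$ restricted to the cubic has degree $3$, tangency at two distinct points would need degree $4$, so $r \leq 1$, and $r = 1$ iff $H$ is tangent somewhere. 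I also need that osculation (order~$3$) is still $r=1$ set-theoretically; the criterion counts points, so "tangent to order 2" in the statement covers this, and I would check that the statement's hypothesis means exactly order $\geq 2$. \autoref{Thm criterion} then gives $U \cong \mathbf A^3$.

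\textbf{\'Etale but not an isomorphism.} Off $R$ the map $\varphi$ is \'etale, and $U = \varphi^{-1}(V) \setminus R$ maps into $V$. The map $\varphi$ has degree $3$. Over a general point of $V$ (three distinct roots, none in $H$), all three preimages lie in $U$, since ramification only occurs at repeated roots. Hence $\varphi|_U$ is generically $3$-to-$1$, and therefore not an isomorphism.
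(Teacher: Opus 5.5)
\textbf{Gap: irreducibility of $D=\varphi^{-1}(H)$ and the osculating case.} Your overall route is the paper's: apply \autoref{Thm criterion} to $\varphi^{-1}(H) \sim f+h$ and $R \sim 2f+h$, compare fibres, bound the number of tangency points using the degree-$3$ intersection with the twisted cubic, and use generic degree $3$ for non-injectivity. Computing $[R]$ as $K_X - \varphi^*K_{\mathbf P^3}$ is a fine alternative to pulling back to $(\mathbf P^1)^3$.

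However, your irreducibility step for $D = \varphi^{-1}(H)$ is wrong, and this is exactly where the hypothesis has to be used. The space $\ell\cdot(\text{all quadrics})$ of binary cubics divisible by $\ell$ has dimension $3$ inside the $4$-dimensional space of cubics. So it is a hyperplane, namely the osculating plane of the twisted cubic at $\ell^3$, and not the whole space. Hence $D \supseteq \{p\}\times\mathbf P^2$ happens precisely when $H$ osculates the small diagonal at $p=[\ell]$. In that case $D = F_p + D'$ with $F_p \sim f$ and $D' \sim h$, so $D$ is reducible and \autoref{Thm criterion} does not apply. In fact $U \not\cong \mathbf A^3$ then: the boundary would have at least three components $F_p, D', R$, which cannot form a basis of $\Pic(X) \cong \mathbf Z^2$ as the exact sequence at the start of \S3 requires. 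So reading the hypothesis as ``order $\geq 2$'' and treating osculation as ``still $r=1$'' would prove a false statement.

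Two related slips. Your remark that ``only finitely many fibres coincide'' conflates two different torsion conditions. Torsion in the cokernel of $\mathscr L \to \mathscr E$ governs irreducibility of $D$, whereas $\mathscr H\tors$ governs $r$. Also, your equivalence ``$D_p = R_p$ if and only if $\lambda(\ell^2 m)=0$ for all $m$'' presupposes that $D_p$ is a line.

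\textbf{Fix.} Take ``tangent to order $2$'' to mean tangent but not osculating, as the paper does. Non-osculation says exactly that $H \neq \ell\cdot(\text{all quadrics})$ for every $\ell$. Then $D$ contains no fibre, so it is irreducible by \autoref{Lem divisor P(E)}, and every $D_p$ is a line. Your fibre comparison now gives $D_p = R_p$ exactly at the tangency point, which is unique by your degree count. So $r=1$ and $e = d+1$, and the criterion yields $U \cong \mathbf A^3$. This is the content of the paper's \autoref{Thm affine}; the theorem then follows together with the \'etaleness and degree-$3$ observations you also make.
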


The theory of symmetric polynomials identifies $\Sym^n(\mathbf P^1)$ with the projective space~$\mathbf P^n$, so~$U$ and~$V$ are open subvarieties of $\mathbf P^1 \times \mathbf P^2$ and~$\mathbf P^3$ respectively. As was noted in various places (see for instance \cite{SBS,LevinsonBlog,vDdBBlog1}), it is clear that $V \cong \mathbf A^3$, that~$\varphi|_U$ is \'etale (because we removed the ramification locus~$R$), and that~$\varphi|_U$ is not an isomorphism (since it has generic degree~$3$).

Thus, the content of the theorem above lies in proving that $U \cong \mathbf A^3$ if~$H$ is tangent to order~$2$ (i.e., tangent but not osculating) to the small diagonal. This can be done by explicit computation, but this does not shed light on the underlying geometry. Instead, we will deduce this from the much more general criterion of \autoref{Thm criterion}. In particular, this also provides a converse; see \autoref{Thm affine}.

We start by studying the geometry of the quotient map $(\mathbf P^1)^n \to \Sym^n(\mathbf P^1)$, which we then use to describe the properties of the divisors~$R$ and~$\varphi^{-1}(H)$.

\subsection{Some lemmas on symmetric powers}
The isomorphism $U \cong \mathbf A^3$ in \autoref{Thm Jacobian} relies on the tangency of the hyperplane $H \subseteq \Sym^3(\mathbf P^1)$ to the small diagonal. We need a local computation to extract the geometric implication of this hypothesis; see \autoref{Cor tangent} below.

By the theory of symmetric polynomials, the quotient $(\mathbf P^1)^n \to \Sym^n(\mathbf P^1)$ is given by
\[
\pi = [\bar\sigma_n : \ldots : \bar\sigma_0 ] \colon (\mathbf P^1)^n \to \mathbf P^n,
\]
where~$\bar\sigma_i$ is the multi-homogenised elementary symmetric polynomial
\[
\bar\sigma_i([x_1:y_1],\ldots,[x_n:y_n]) = \sum_{\substack{I \subseteq \{1,\ldots,n\}\\ \lvert I \rvert = i}} \prod_{j \in I} x_j \cdot \prod_{j \not\in I} y_j.
\]
Since~$\pi$ is homogeneous of degree $(1,\ldots,1)$, we get $\pi^*\mathcal O_{\mathbf P^n}(1) \simeq \mathcal O_{(\mathbf P^1)^n}(1,\ldots,1)$. We also note that the image of the small diagonal in~$(\mathbf P^1)^n$ gives (a slightly reparametrised version of) the rational normal curve $\iota \colon \mathbf P^1 \to \mathbf P^n$, where the~$i$-th coordinate is $\binom ni x^{n-i}y^i$. We denote the image of~$\iota$ by~$C$.

\begin{Lemma}\label{Lem p_i}
Let $\pi \colon (\mathbf P^1)^n \to \mathbf P^n$ be the map above. Then the preimage of the linear subspace $\{[x_0:\ldots:x_n]\ |\ x_0 = \ldots = x_i = 0\}$ is the locus in~$(\mathbf P^1)^n$ where at least~$i+1$ of the coordinates are equal to~$[0:1]$.
\end{Lemma}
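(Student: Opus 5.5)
The plan is to use the generating polynomial of the elementary symmetric polynomials. This turns the vanishing of the first $i+1$ coordinates into a degree condition on a product of linear forms in one variable.

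First, fix the indexing. Since $\pi = [\bar\sigma_n : \ldots : \bar\sigma_0]$, the coordinate $x_k$ on $\mathbf P^n$ pulls back to $\bar\sigma_{n-k}$. Hence the preimage of $\{x_0 = \ldots = x_i = 0\}$ is the common zero locus of $\bar\sigma_n, \bar\sigma_{n-1}, \ldots, \bar\sigma_{n-i}$. I would prove the statement on points: for every field $K$ and every point $([x_1:y_1],\ldots,[x_n:y_n]) \in (\mathbf P^1)^n(K)$, these $i+1$ forms vanish if and only if at least $i+1$ of the $[x_j:y_j]$ equal $[0:1]$. This identifies the loci as closed subsets, which is what the lemma asserts; any statement about scheme structure would need a separate argument that I do not attempt.

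The key identity, read directly off the definition of $\bar\sigma_k$, is
\[
\prod_{j=1}^n (y_j + x_j T) = \sum_{k=0}^n \bar\sigma_k([x_1:y_1],\ldots,[x_n:y_n])\, T^k
\]
in $K[T]$, for any choice of representatives $(x_j,y_j)$. Rescaling a representative only rescales the whole product by a nonzero constant, so the vanishing of any coefficient is well defined. Each factor $y_j + x_jT$ is a nonzero polynomial, because $(x_j,y_j) \neq (0,0)$. Its degree is $1$ if $x_j \neq 0$, and $0$ if $x_j = 0$, that is, if $[x_j:y_j] = [0:1]$. Since $K[T]$ is a domain, the degree of the product is the number $N$ of indices $j$ with $x_j \neq 0$.

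Now $\bar\sigma_n = \ldots = \bar\sigma_{n-i} = 0$ says exactly that the product has degree at most $n-i-1$, that is, $N \leq n-i-1$. Equivalently, at least $i+1$ coordinates equal $[0:1]$, which is the claim. There is no real obstacle: the only points needing care are the index reversal between $x_k$ and $\bar\sigma_{n-k}$, and the observation that no factor vanishes identically, so degrees add.
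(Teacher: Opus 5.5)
Your proof is correct and is essentially the paper's argument. The paper writes the generating polynomial as $\prod_j (x_j + y_jT) = \sum_i \bar\sigma_{n-i}T^i$ and reads the vanishing of $\bar\sigma_n,\ldots,\bar\sigma_{n-i}$ as divisibility by $T^{i+1}$, whereas you use the reversed polynomial $\prod_j (y_j + x_jT)$ and read it as a degree bound, which is the same argument after substituting $T \mapsto T^{-1}$.
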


\begin{proof}
The coordinates $([x_1:y_1],\ldots,[x_n:y_n])$ in~$(\mathbf P^1)^n$ satisfy the relation
\[
\sum_{i=0}^n \bar\sigma_{n-i}T^i = (x_1+y_1T)\cdots(x_n+y_nT).
\]
The left hand side shows that $\bar\sigma_n = \ldots = \bar\sigma_{n-i} = 0$ if and only if this polynomial is divisible by~$T^{i+1}$, which by examining the right hand side means exactly that at least~$i+1$ of the coordinates are equal to~$[0:1]$.
\end{proof}

\begin{Cor}\label{Cor tangent}
A hyperplane $H \subseteq \mathbf P^n$ is tangent to~$C$ to order~$\geq i$ at~$p$ if and only if~$\pi^{-1}(H)$ contains the locus where at least~$n-i+1$ of the coordinates are equal to~$p$.
\end{Cor}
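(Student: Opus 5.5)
The plan is to use the symmetry of $\mathrm{PGL}_2$ (or a direct coordinate change) to reduce to the case $p = [0:1]$, and then compare two descriptions of the same linear conditions on~$H$.

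\emph{Reduction.} The group $\mathrm{PGL}_2$ acts on $(\mathbf P^1)^n$ diagonally. Via the identification $\Sym^n(\mathbf P^1) \cong \mathbf P^n$, it acts linearly on~$\mathbf P^n$: binary forms of degree~$n$ transform linearly. The map~$\pi$ is equivariant and~$C$ is preserved. Since both sides of the statement are invariant under this action, we may assume $p = [0:1]$. Under~$\iota$, this point maps to $[0:\ldots:0:1]$, the point with only the last coordinate nonzero.

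\emph{Tangency side.} Write $H = \{\sum_j a_j x_j = 0\}$. Then $\iota^*H$ is the binary form $\sum_j a_j \binom nj x^{n-j}y^j$. With $p = [0:1]$, i.e.\ $x = 0$, the order of vanishing of this form at~$p$ is the smallest power of~$x$ with nonzero coefficient. The term $x^{n-j}$ occurs with coefficient $a_j\binom nj$. So ``tangent to order $\geq i$ at~$p$'', meaning $\iota^*H$ vanishes to order $\geq i+1$ at~$p$ (order~$1$ being plain incidence), is equivalent to $a_j = 0$ for all $j > n-i-1$, i.e.\ $a_{n-i} = \ldots = a_n = 0$. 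Here we need $\binom nj \neq 0$ in the base field; I would assume characteristic~$0$ as in the appendix, or more generally characteristic $> n$. This is exactly the condition that $H$ contains the linear subspace $\Lambda = \{x_0 = \ldots = x_{n-i} = 0\}$, hmm, up to index bookkeeping. I must check the indexing carefully, since $\pi = [\bar\sigma_n:\ldots:\bar\sigma_0]$, so coordinate~$x_j$ equals $\bar\sigma_{n-j}$. Indeed, $H \supseteq \Lambda$ if and only if~$H$ is a combination of $x_0,\ldots,x_{n-i}$, i.e.\ $a_j = 0$ for $j > n-i$. I will need to match the conventions so that the count gives exactly ``order $\geq i$''. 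I expect the correct statement to be that order-$\geq i$ tangency means $H$ contains the osculating space of dimension $i-1$ at $\iota(p)$, which is the span of the last $i$ coordinate points: $\{x_0 = \ldots = x_{n-i} = 0\}$.

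\emph{Preimage side.} By \autoref{Lem p_i} with index $n-i$, the preimage $\pi^{-1}(\{x_0 = \ldots = x_{n-i} = 0\})$ is the locus $Z_i$ where at least $n-i+1$ coordinates equal~$p$. So if $H \supseteq \Lambda$, then $\pi^{-1}(H) \supseteq Z_i$. Conversely, if $\pi^{-1}(H) \supseteq Z_i$, then $H \supseteq \pi(Z_i)$. Since~$\pi$ is surjective, \autoref{Lem p_i} gives $\pi(Z_i) = \Lambda$ set-theoretically, so $H \supseteq \Lambda$ because~$H$ is reduced. Combining this with the tangency computation proves the corollary.

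The main obstacle is the index bookkeeping: matching ``order $\geq i$'' with the osculating flag and with the ``$i+1$'' in \autoref{Lem p_i}. A second point is making sure the reduction to $p = [0:1]$ is legitimate. Alternatively, one can avoid the group action and do the computation at a general~$p$ by a linear change of variable on~$\mathbf P^1$, which acts compatibly on the elementary symmetric functions.
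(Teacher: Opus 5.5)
Your argument is the paper's own: reduce to $p=[0:1]$, read off the order of contact from the coefficients $a_j\binom nj$ of $\iota^*H$, translate this into containment of a coordinate subspace, and apply \autoref{Lem p_i} with index $n-i$. Your surjectivity argument for the converse makes explicit what the paper leaves implicit. One small correction there: what you actually use is that $\Lambda$ is reduced, not that $H$ is.

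The thing you must fix, rather than leave as ``up to index bookkeeping'', is the convention. With your stated definition (order $\geq i$ means $\iota^*H$ vanishes to order $\geq i+1$), you correctly get $a_{n-i}=\ldots=a_n=0$. But that says $H\supseteq V(x_0,\ldots,x_{n-i-1})$, not $H\supseteq\Lambda$, so your claim that it is ``exactly the condition that $H$ contains $\Lambda$'' fails. \autoref{Lem p_i} would then give ``at least $n-i$ coordinates'', so the statement would be off by one, and in fact false. For $i=1$ it would say $H$ is tangent at $p$ if and only if $(p,\ldots,p)\in\pi^{-1}(H)$, i.e.\ if and only if $\iota(p)\in H$.

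The statement holds with the convention you guess at the end, which is also the paper's: order $\geq i$ means $\iota^*H$ vanishes to order $\geq i$ at $p$. Thus order $1$ is incidence, order $2$ is tangency and order $3$ is osculation, matching ``tangent to order~$2$ (i.e., tangent but not osculating)'' in the appendix. Then the condition is $a_{n-i+1}=\ldots=a_n=0$. This says exactly that $H$ contains the osculating $(i-1)$-plane $V(x_0,\ldots,x_{n-i})$, and the rest of your argument goes through with no further adjustment.
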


\begin{proof}
Applying a coordinate transformation, we may assume $p = [0:1]$. If~$H$ is given by $a_0x_0 + \ldots + a_nx_n = 0$, then its pullback along the rational normal curve $\mathbf P^1 \to \mathbf P^n$ is given by $\sum_{i=0}^n a_i \binom n i x^{n-i}y^i$, so it is tangent to order~$\geq i$ at~$[0:1]$ if and only if $a_n = \ldots = a_{n-i+1} = 0$. This means that~$H$ contains the linear subspace $V(x_0,\ldots,x_{n-i})$, which by \autoref{Lem p_i} means that~$\pi^{-1}(H)$ contains the locus where at least~$n-i+1$ of the coordinates are equal to~$p$.
\end{proof}

\subsection{Geometric properties of the boundary divisors}
To study $(\mathbf P^1 \times \mathbf P^2) \setminus (R \cup \varphi^{-1}(H))$, we first give a geometric description of the boundary divisors~$R$ and~$\varphi^{-1}(H)$, as well as their intersection $R \cap \varphi^{-1}(H)$.

\begin{Lemma}\label{Lem R}
The divisor~$R$ is irreducible of degree~$(2,1)$, and its pullback to~$(\mathbf P^1)^3$ is the union of pairwise diagonals $\Delta_{12} \cup \Delta_{13}$.
\end{Lemma}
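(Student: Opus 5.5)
Write $\rho \colon (\mathbf P^1)^3 \to \mathbf P^1 \times \Sym^2(\mathbf P^1)$ for the quotient by the transposition of the last two factors, so that $\varphi\circ\rho$ is the full quotient map $(\mathbf P^1)^3 \to \Sym^3(\mathbf P^1)$. The plan is to identify $R$ directly as a set, then pin down its class by pulling back along $\rho$.

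\textbf{Step 1: $R$ as a set.} A point $(p,\{q,r\})$ is a ramification point of $\varphi$ exactly when $p \in \{q,r\}$. To see this, use the relation $\sum \bar\sigma_{3-i}T^i = \prod(x_j+y_jT)$ from the proof of \autoref{Lem p_i}. In these coordinates $\varphi$ is the multiplication of binary forms
\[
\mathbf P(\text{linear forms}) \times \mathbf P(\text{quadratic forms}) \to \mathbf P(\text{cubic forms}).
\]
Its differential at $(\ell,g)$ is $(\dot\ell,\dot g) \mapsto \dot\ell g + \ell\dot g$, taken modulo the two scaling directions. By the standard coprimality argument, this map is injective modulo those directions if and only if $\gcd(\ell,g)=1$. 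Hence the ramification locus is the incidence locus $\{p \in \{q,r\}\}$, whose preimage under $\rho$ is $\Delta_{12}\cup\Delta_{13}$ set-theoretically.

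\textbf{Step 2: irreducibility.} The two components $\Delta_{12}$ and $\Delta_{13}$ are swapped by the transposition, so their common image $R$ is irreducible. Concretely, $R$ is the image of $\mathbf P^1\times\mathbf P^1 \to \mathbf P^1\times\Sym^2\mathbf P^1$, $(p,q)\mapsto(p,\{p,q\})$.

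\textbf{Step 3: degree.} I would first compute the class of the ramification divisor via Hurwitz,
\[
K_{\text{source}} = \varphi^*K_{\mathbf P^3} + R.
\]
Here $\varphi^*\mathcal O(1)=\mathcal O(1,1)$, by pulling back to $(\mathbf P^1)^3$ where it is $\mathcal O(1,1,1)$, and $\rho^*\mathcal O_{\mathbf P^2}(1)=\mathcal O(1,1)$ on the last two factors. Thus $\varphi^*K_{\mathbf P^3}=(-4,-4)$ and $K_{\mathbf P^1\times\mathbf P^2}=(-2,-3)$, giving $R=(2,1)$. This is valid in characteristic $0$, or whenever the ramification is tame, since the local ramification index is $2$.

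As a check, $\rho^*R$ has class $(2,1,1)$, while
\[
[\Delta_{12}]+[\Delta_{13}]=(1,1,0)+(1,0,1)=(2,1,1).
\]
Since these classes agree, $R$ is reduced and $\rho^*R=\Delta_{12}+\Delta_{13}$ scheme-theoretically, with no multiplicity. Alternatively, one can avoid Hurwitz: writing $R=(a,b)$, compare $\rho^*R=(a,b,b)$ with the support $\Delta_{12}\cup\Delta_{13}$. The multiplicity is $1$ because $\rho$ is unramified at a general point of $\Delta_{12}$, which lies off the diagonal $\Delta_{23}$.

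\textbf{Main obstacle.} The delicate point is the scheme-theoretic statement, namely that $R$ is reduced and the pullback carries no extra multiplicity. I would handle it with the class comparison in Step 3. A local coordinate check at a general point of $\Delta_{12}$ also works, where $\varphi\circ\rho$ is locally $(s,t,u)\mapsto$ the elementary symmetric functions and the Jacobian is the Vandermonde $(s-t)(s-u)(t-u)$. Dividing by the factor $(t-u)$ coming from $\rho$ leaves $(s-t)(s-u)$, which is reduced.
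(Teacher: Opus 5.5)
Your proof is correct, but its two substantive steps take a different route from the paper's.

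\textbf{The paper's route.} The paper never computes a differential. It uses that the $S_3$-cover $(\mathbf P^1)^3 \to \Sym^3(\mathbf P^1)$ has inertia $\langle (ij)\rangle$, hence ramification index $2$, along each $\Delta_{ij}$. Multiplicativity of ramification indices for $\varphi\circ\rho$ then shows that the ramification along $\Delta_{23}$ is absorbed by $\rho$, while the ramification along $\Delta_{12}$ and $\Delta_{13}$ is carried by $\varphi$. The class $(2,1)$ is read off from $\rho^*(a,b)=(a,b,b)$, which is exactly your ``alternatively'' argument. Irreducibility comes from the Galois swap, as in your Step~2.

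\textbf{Your route.} You locate the ramification by the tangent-space computation for multiplication of binary forms. You then get the class from Hurwitz, using $K_{\mathbf P^1\times\mathbf P^2}=(-2,-3)$ and $\varphi^*K_{\mathbf P^3}=(-4,-4)$.

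\textbf{What each buys.} The paper's argument is shorter and needs no canonical classes. However, it only treats $R$ as a reduced divisor, and it relies on the ramification index equalling the order of the inertia group. That is fine in the characteristic-$0$ setting of the appendix, but fails in characteristic $2$: there the index along a diagonal is $1$, with an inseparable residue field extension. Your argument gives more:
\begin{itemize}
\item an explicit description of the ramification locus as $\gcd(\ell,g)\neq 1$;
\item the scheme-theoretic fact that the Jacobian divisor of $\varphi$ is reduced, with $\rho^*R=\Delta_{12}+\Delta_{13}$. This follows from the class comparison, or from the chain-rule factorisation of the Vandermonde $(s-t)(s-u)(t-u)$ as $\rho^*J(\varphi)\cdot(t-u)$ up to sign.
\end{itemize}

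Your caveat about tameness is unnecessary. Hurwitz in the form $K=\varphi^*K+\operatorname{div}(\det d\varphi)$ holds for any finite separable morphism of smooth varieties. Your class comparison already forces multiplicity $1$, so your proof works in every characteristic.
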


\begin{proof}
The cover $(\mathbf P^1)^3 \to \mathbf P^3$ is ramified along the union of the pairwise diagonals~$\Delta_{ij}$, which each have ramification index~$2$ since their inertia subgroups are~$\langle(ij)\rangle$. Using multiplicativity of ramification indices \cite[Tag \href{https://stacks.math.columbia.edu/tag/0BRL}{0BRL}]{Stacks}, we see that the preimage of~$R$ in~$(\mathbf P^1)^3$ is $\Delta_{12} \cup \Delta_{13}$, which has degree $(1,1,0) + (1,0,1) = (2,1,1)$. Thus,~$R$ has degree~$(2,1)$, since the pullback of $\mathcal O_{\mathbf P^2}(1)$ to~$(\mathbf P^1)^2$ is $\mathcal O_{(\mathbf P^1)^2}(1,1)$. Since the Galois group of $(\mathbf P^1)^3 \to \mathbf P^1 \times \mathbf P^2$ interchanges the two components of $\Delta_{12} \cup \Delta_{13}$, we see that~$R$ is irreducible.
\end{proof}

Given a subscheme $Z \subseteq \mathbf P^1 \times \mathbf P^2$ and a point $p \in \mathbf P^1$, we will write $Z_p \subseteq \mathbf P^2$ for the fibre $Z \cap (\{p\} \times \mathbf P^2)$ above~$p$.

\begin{Lemma}\label{Lem H}
The divisor~$\varphi^{-1}(H)$ has degree~$(1,1)$. Given a point $p \in \mathbf P^1$, the fibre~$\varphi^{-1}(H)_p$ equals~$R_p$ (resp.~$\mathbf P^2$) if and only if~$H$ is tangent to order~$2$ (resp.~$3$) at~$p$.
\end{Lemma}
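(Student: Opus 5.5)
We have to prove Lemma \ref{Lem H}: the degree of $\varphi^{-1}(H)$ and the description of its fibres. The plan is to pull everything back to $(\mathbf P^1)^3$ along the quotient $\rho \colon (\mathbf P^1)^3 \to \mathbf P^1 \times \Sym^2(\mathbf P^1) = \mathbf P^1 \times \mathbf P^2$, $(p,q,r) \mapsto (p,\{q,r\})$. Then $\varphi \circ \rho = \pi$ is the quotient by $S_3$, and \autoref{Lem p_i} and \autoref{Cor tangent} apply directly.

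\emph{Degree.} We have $\pi^*\mathcal O_{\mathbf P^3}(1) \simeq \mathcal O(1,1,1)$. Since $\rho^*\mathcal O(a,b) \simeq \mathcal O(a,b,b)$ and $\rho^*$ is injective on $\Pic$, it follows that $\varphi^*\mathcal O_{\mathbf P^3}(1) \simeq \mathcal O(1,1)$. Hence $\varphi^{-1}(H)$ has degree $(1,1)$.

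\emph{Fibres.} Fix $p \in \mathbf P^1$. The fibre of $\rho$ over $\{p\}\times\mathbf P^2$ is $\{p\}\times(\mathbf P^1)^2 \to \Sym^2(\mathbf P^1)$, which is finite and surjective. So it suffices to compare the preimages $\rho^{-1}(\varphi^{-1}(H))_p$ and $\rho^{-1}(R)_p$ as subsets of $\{p\}\times(\mathbf P^1)^2$; these are reduced divisors or all of the space, and a divisor on $\mathbf P^2$ is determined by its (reduced, since both are lines or conics) pullback.

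By \autoref{Lem R}, $\rho^{-1}(R)_p$ is $\{q = p\} \cup \{r = p\}$, the locus in $(\mathbf P^1)^3$ where at least $2$ coordinates equal $p$, intersected with the fibre. The divisor $\varphi^{-1}(H)_p$ is either all of $\mathbf P^2$ or a line, since it is a $(1,1)$-divisor restricted to a fibre. The key observation is that $R_p$ is also a line: $R$ has degree $(2,1)$, and $R_p$ is the image of the diagonal-type locus, i.e.\ the tangent line to the conic at $\{p,p\}$. So $\varphi^{-1}(H)_p = R_p$ if and only if $\varphi^{-1}(H) \supseteq R_p$, which holds if and only if $\pi^{-1}(H)$ contains $\{p\}\times(\{q=p\}\cup\{r=p\})$. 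Using the $S_3$-invariance of $\pi^{-1}(H)$, this is equivalent to $\pi^{-1}(H)$ containing the locus where at least $2$ coordinates equal $p$. By \autoref{Cor tangent} with $n=3$ and $n-i+1 = 2$, this is tangency of order $\ge 2$.

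Similarly, $\varphi^{-1}(H)_p = \mathbf P^2$ if and only if $\pi^{-1}(H) \supseteq \{p\}\times(\mathbf P^1)^2$. By symmetry this is the locus where at least $1$ coordinate equals $p$, which corresponds to order $\ge 3$.

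To get the exact orders as stated, observe that order $3$ gives $\mathbf P^2 \ne R_p$. So "equals $R_p$" means order $\geq 2$ but not $3$, i.e.\ order exactly $2$; I read "tangent to order $2$" in the statement as exactly $2$.

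\emph{Main obstacle.} The delicate step is the symmetry reduction: $\pi^{-1}(H)$ is $S_3$-invariant, so containing one component, such as $\{x_1 = x_2 = p\}$, implies containing its orbit. A related subtlety is making sure that containment of sets is enough to get equality of divisors, which requires identifying $R_p$ as a reduced line.
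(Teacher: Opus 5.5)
Your proposal is correct and follows the paper's proof. Both arguments pull back along $(\mathbf P^1)^3 \to \mathbf P^1 \times \mathbf P^2$, read off the fibre of the preimage of $R$ from \autoref{Lem R}, apply \autoref{Cor tangent} with $i = 2, 3$, and upgrade containment to equality because both fibres are lines; your explicit use of $S_3$-invariance spells out a step the paper leaves implicit. The only slip is the intermediate claim that $\varphi^{-1}(H)_p = R_p$ if and only if $\varphi^{-1}(H) \supseteq R_p$, which fails in the order~$3$ case where the whole fibre is contained. Your final paragraph correctly repairs this by excluding that case.
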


\begin{proof}
The degree statement follows since the pullback of $\mathcal O_{\mathbf P^1 \times \mathbf P^2}(a,b)$ along the quotient map $(\mathbf P^1)^3 \to \mathbf P^1 \times \mathbf P^2$ is $\mathcal O_{(\mathbf P^1)^3}(a,b,b)$, and~$\pi^{-1}(H)$ has degree $(1,1,1)$. To see whether the fibre of~$\varphi^{-1}(H)$ agrees with that of~$R$ or $\mathbf P^1 \times \mathbf P^2$, it suffices to check this after pulling back along the map $g \colon (\mathbf P^1)^3 \to \mathbf P^1 \times \mathbf P^2$. Now \autoref{Lem R} gives $g^*R = \Delta_{12} \cup \Delta_{13}$, whose fibre above~$p$ is $(\{p\} \times \mathbf P^1) \cup (\mathbf P^1 \times \{p\})$. On the other hand, by \autoref{Cor tangent}, the pullback of~$H$ to $\{p\} \times (\mathbf P^1)^2$ contains $(\{p\} \times \mathbf P^1) \cup (\mathbf P^1 \times \{p\})$ (resp. equals~$(\mathbf P^1)^2$) if and only if~$H$ is tangent to order~$\geq 2$ (resp.~$\geq 3$) at~$p$. In the order~$2$ case, the inclusion $R_p \hookrightarrow \varphi^{-1}(H)_p$ is an equality since both are effective divisors of degree~$1$ on~$\mathbf P^2$.
\end{proof}

\begin{Thm}\label{Thm affine}
The complement $U = \mathbf P^1 \times \mathbf P^2 \setminus (R \cup \varphi^{-1}(H))$ is isomorphic to~$\mathbf A^3$ if and only if~$H$ is tangent but not osculating to the small diagonal.
\end{Thm}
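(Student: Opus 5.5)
We will apply \autoref{Thm criterion} with $X = \mathbf P^1 \times \mathbf P^2 = \mathbf P(\mathcal O^3)$ over the first factor $\mathbf P^1$. Here $f$ is the class of degree $(1,0)$ and $h$ the class of degree $(0,1)$, so by \autoref{Lem R} and \autoref{Lem H} we have $R \sim 2f + h$ and $\varphi^{-1}(H) \sim f + h$. Thus $d = 2$ and $e = 1$, and the condition $e = d \pm 1$ is automatic. The remaining points are:
\begin{enumerate}
\item both divisors are irreducible and distinct, as \autoref{Thm criterion} requires;
\item the number~$r$ of points $p \in \mathbf P^1$ with $R_p = \varphi^{-1}(H)_p$ equals the number of points at which~$H$ is tangent to order~$2$.
\end{enumerate}

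Irreducibility of~$R$ is \autoref{Lem R}. For $\varphi^{-1}(H)$, which has class $f+h$, there is a subtlety: if~$H$ osculates~$C$ at some~$p$ (order~$3$), then by \autoref{Lem H} the fibre $\varphi^{-1}(H)_p$ is all of~$\mathbf P^2$. Hence $\varphi^{-1}(H) = (\{p\}\times\mathbf P^2) + D'$ with $D' \sim h$ is reducible. In that case I argue directly that $U \not\cong \mathbf A^3$. The boundary has three irreducible components, namely~$R$, the fibre, and~$D'$, while $\Pic$ has rank~$2$, so the exact sequence at the start of \S\ref{Sec criterion} rules this out. The same reasoning disposes of any other reducible case: if $\varphi^{-1}(H)$ is reducible, then $s \geq 3$, contradicting $s = 2$. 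If $\varphi^{-1}(H)$ is irreducible, it differs from~$R$ since their classes differ, so \autoref{Thm criterion} applies.

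Now assume $\varphi^{-1}(H)$ is irreducible. By \autoref{Lem H}, the points where $R_p = \varphi^{-1}(H)_p$ are exactly those~$p$ at which~$H$ is tangent to~$C$ to order exactly~$2$ (order $\geq 3$ is already excluded). Hence $U \cong \mathbf A^3$ if and only if there is exactly one such point.

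It remains to reconcile ``exactly one point of tangency of order exactly~$2$, and no point of order~$\geq 3$'' with ``tangent but not osculating to the small diagonal''. Pulling back~$H$ along $\iota\colon \mathbf P^1 \to \mathbf P^3$ gives a degree-$3$ divisor on~$\mathbf P^1$. Tangency of order~$2$ at~$p$ means $2p$ is contained in it, which forces the divisor to be $2p + p'$ with $p' \neq p$ when the order is exactly~$2$. Two distinct tangency points would require degree $\geq 4$. So tangency of order~$2$ happens at most at one point, and when it does there is no osculation anywhere. Conversely, ``tangent but not osculating'' gives precisely one such point. The main obstacle is the careful bookkeeping in the reducible/osculating case and making sure \autoref{Thm criterion}'s hypotheses (irreducibility, distinctness) are checked. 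I would also address $H \supseteq$ nothing degenerate: $H$ cannot contain~$C$, since $C$ spans~$\mathbf P^3$.
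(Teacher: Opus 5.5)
Your overall route is the paper's: classes $2f+h$ and $f+h$ from \autoref{Lem R} and \autoref{Lem H}, the Picard-rank argument when $H$ osculates, then \autoref{Thm criterion} with $r$ read off from \autoref{Lem H}. The degree-$3$ bookkeeping at the end is fine. There is, however, a gap in the ``if'' direction.

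You only prove that osculation forces $\varphi^{-1}(H)$ to be reducible. You then dispose of ``any other reducible case'' by showing $U \not\cong \mathbf A^3$. Combined with your application of \autoref{Thm criterion}, this establishes only the following: $U \cong \mathbf A^3$ if and only if $\varphi^{-1}(H)$ is irreducible and there is exactly one point of tangency of order exactly~$2$. In your last paragraph you silently replace ``$\varphi^{-1}(H)$ irreducible'' by ``no point of order $\geq 3$''. That replacement needs the converse you never prove: if $H$ osculates $C$ nowhere, then $\varphi^{-1}(H)$ is irreducible. Without it, starting from ``tangent but not osculating'' you cannot check the irreducibility hypothesis of \autoref{Thm criterion}, so $U \cong \mathbf A^3$ does not follow. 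The ``only if'' direction is unaffected.

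The missing step is short. Write a reducible effective divisor of class $f+h$ on $\mathbf P^1 \times \mathbf P^2$ as a sum of two nonzero effective divisors. Restricting to the generic fibre shows their $h$-coefficients are $0$ and $1$. The summand with $h$-coefficient $0$ has no component dominating $\mathbf P^1$, so it is a nonzero sum of fibres. Hence $\varphi^{-1}(H)$ is reducible if and only if it contains some $\{p\} \times \mathbf P^2$. Equivalently, by \autoref{Lem divisor P(E)}, the cokernel of the corresponding map $\mathcal O(-1) \hookrightarrow \mathcal O^3$ has torsion exactly when that map vanishes at some $p$. By \autoref{Lem H}, $\varphi^{-1}(H) \supseteq \{p\} \times \mathbf P^2$ means $H$ is tangent to order~$3$ at $p$, i.e.\ osculating. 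This is exactly the sentence in the paper's proof: ``$H$ does not contain a vertical fibre, hence is irreducible by \autoref{Lem divisor P(E)}.'' With it added, your argument is complete.
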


\begin{proof}
By \autoref{Lem R} and \autoref{Lem H}, the divisors~$R$ and~$\varphi^{-1}(H)$ have degrees~$(2,1)$ and~$(1,1)$ respectively. If~$H$ is osculating to the small diagonal (i.e., tangent to order~$3$) at a point~$p$, then~$\varphi^{-1}(H)$ contains $\{p\} \times \mathbf P^2$ by \autoref{Lem H}, hence is reducible. Then the components of~$R$ and~$\varphi^{-1}(H)$ cannot form a basis of $\Pic(\mathbf P^1 \times \mathbf P^2)$, so~$U$ is not isomorphic to~$\mathbf A^3$. In all other cases, $H$ does not contain a vertical fibre $\{p\} \times \mathbf P^2$, hence is irreducible by \autoref{Lem divisor P(E)}. By \autoref{Lem R}, we know that~$R$ is irreducible. By \autoref{Thm criterion}, we see that $U \cong \mathbf A^3$ if and only if $R \cap \varphi^{-1}(H)$ is a union of a section and a line $\{p\} \times L$ in a vertical fibre. By \autoref{Lem H}, this happens if and only if~$H$ is tangent to the small diagonal at a point~$p$ (of which there can be at most one since~$H$ intersects the small diagonal with multiplicity~$3$).
\end{proof}

\bibliographystyle{alphaurledit}
{\phantomsection\footnotesize\bibliography{Affine.bib}}
\addcontentsline{toc}{section}{References}

\end{document}